\newcommand{\R}{\ensuremath{\mathbb{R}}}
\newcommand{\N}{\ensuremath{\mathbb{N}}}
\def \ds {\displaystyle}  
\def \tn {\textnormal}  
\def \bac {\backslash}  
\newtheorem{theorem}{Theorem}[section]
\newtheorem{lemma}[theorem]{Lemma}
\newtheorem{definition}[theorem]{Definition}
\newtheorem{proposition}[theorem]{Proposition}
\newtheorem{remark}{Remark}[section]
\newcommand{\tpitchfork}{%
  \vbox{
    \baselineskip\z@skip
    \lineskip-.52ex
   \lineskiplimit\maxdimen
    \m@th
    \ialign{##\crcr\hidewidth\smash{$-$}\hidewidth\crcr$\pitchfork$\crcr}
  }%
}
\title[Lipschitz Dynamical Systems]{A Lipschitz version of $\lambda$-Lemma and a characterization of homoclinic and heteroclinic orbits}
\numberwithin{equation}{section} 
\numberwithin{theorem}{section}
\author[G. La Guardia]{Giuliano G. La Guardia}
\address[Giuliano G. La Guardia]{Departamento  de Matem\'atica e Estatística,
Universidade Estadual de Ponta Grossa, 84030-900 Ponta Grossa PR, Brazil}
\email{gguardia@uepg.br}
\author[L. Pires]{Leonardo Pires}
\address[L. Pires]{Departamento de Matem\'atica e Estatística, Universidade Estadual de Ponta Grossa, 84030-900, Ponta Grossa -PR, Brasil.}
\email{lpires@uepg.br}
\thanks{ 
\vskip .02in\noindent Key words and phrases. Chaos, Lambda Lemma, Lipschitz function, Lyapunov exponent.
\newline\noindent 2010 Mathematics Subject Classification. 37D05, 37D10, 37D45.}
\begin{document}

\begin{abstract}
In this paper we consider finite dimensional dynamical systems generated by a Lipschitz function.  We prove a version of the Witney's Extension Theorem on compact manifold  to obtain a version of the Lambda Lemma for Lipschitz functions.  The notions of Lipschitz transversality and hyperbolicity are studied in the context of finite dimension with a  norm weaker than $C^1$-norm  and stronger than $C^0$-norm. 
\end{abstract}

\maketitle

\section{Introduction}
Theory of differentiable dynamical systems is widely studied in several works where the dynamics of a $C^1$ diffeomorphism on a compact manifold $M$ is  considered. A lot of results are obtained in  the space  $\tn{Diff}(M)$ of all diffeomorphism endowed with the $C^1$-norm as we can see in \cite{Palis1982,shub,Smale_diff,Wen}. Another important class is Hom$(M)$ of dynamical systems generated by a homeomorphism  with the $C^0$-norm \cite{pilyudin1994}. In those classes two dynamical systems are essentially the same if there is a conjugation between them, i.e., a homeomorphism that 
maps trajectories preserving all fixed and periodic points. The set of fixed points  and  points that have a special type of recurrence as periodic points  is called nonwandering set and it follows from Tubular Flow Theorem that it is sufficient to compare dynamics locally in nonwandering set and connection between its elements. Therefore the permanence of the nowandering points under $C^1$ small perturbations is central in several works, for example \cite{Peixoto,Palis1968,Palis1070}. The property of preserving the dynamics is called  structural stability and  it is related with the conditions  Axiom A and strong transversality see \cite{Peixoto,Palis1070}, but both concepts require differentiability, thus the dynamical systems generated by continuous functions that have no differentiability had little development from the point of view of structural stability due to the absence of geometrical concepts such as transversality and hyperbolicity.

A consequence of Mean Value Theorem is  that a differentiable function in the Euclidian spaces is locally Lipschitz. On the other hand  the Rademacher's Theorem  ensure that a Lipschitz function is differentiable almost everywhere with respect to Lebesgue measure \cite{Juha_2004}. Therefore the class of invertible Lipschitz functions with inverse  also Lipschitz seems to be closed to diffeomorphisms' class. In fact, as we will see, there is many interesting geometric property from dynamical  system generate by a Lipschitz function not differentiable on Riemannian compact smooth manifold. The space Lip$(M)$ of Lipschitz maps has a natural norm considered in several works in order to get approximation and extension results in nonlinear functional analysis \cite{GeraldBeer,Cobzas2019,Garrido_2013,Juha_2004}. In \cite{lip_banach} the authors introduced the notion of hyperbolicity and transversality for Lipschitz function in Banach spaces to deal with a nonlinear evolution equation whose solution defines an infinity dimensional semi dynamical system which is Lipschitz but never differentiable.

In this paper we  propose a setting of Lipschitz dynamical system in finite dimension. We will show that  some of the results which are valid to discrete standard smooth dynamical systems also hold when considering a class of Lipschitz function instead of class of differentiable function. Our results strongly depend on finite dimension and sometimes on  compactness of space, these assumptions are essentials to prove that a Lipschitz function on a connected compact manifold can be approximated by a smooth function in the norm of Lip$(M)$. With this theorem and with the notion of Lipschitz hyperbolicity and transversality we will obtain a version of the Lambda Lemma for Lipschitz function and then we will study the chaotic behavior in a neighborhood of a homoclinic tranverse point. We also deal with the transitivity of points heteroclinically related.

Before we begin our approach we need to ensure that our results are indeed relevant. For this we point some  facts and examples to support our statements. The Lipschitz version of the Tubular Flow Theorem was proved in \cite{Craig_2008} thus, we will focus our attention on nonwandering set. The existence of not smooth Lipschitz function is very known, for example the distance function,  but we will consider more interesting examples. In \cite{Giovanni_2010} the authors proved that for each Lebesgue measure null set in $\R^2$ there is a Lipschitz function not differentiable in this set, thus it is possible to obtain it for example in a periodic orbit or homoclinic point. In fact, take $x_0\in \R$ and open connected sets $G_1\supset G_2\supset ...\supset\{x_0\}$ sufficiently small for that $G_k$ has small measure in  $G_{k-1}$, define $f_k(x)=|(-\infty,x)\cap G_k|$, then $f_k^\prime(x)=1$ for all $x\in G_k$ but the slop is close to zero, thus $f(x)=\sum(-1)^kf_k(x)$ is not differentiable at $\cap G_k=\{x_0\}$ and so $g(x)=f(x_0-x)-f(x_0)$ at the fixed point $x_0$. It's interesting to know what kind of dynamics $g$ has near $x_0$. We will address this kind of question in Section \ref{One dimensional dynamics}, we will see there that in this case the fixed point $x_0$ have behavior as a sink. Another interesting example appears when we consider the equation $\ddot{x}-x+\varphi=0$, where $\varphi$ is Lipschitz not differentiable at $0=\varphi(0)$ and $1=\varphi(1)$ with $\int_0^1\varphi=1/2$.   The energy $E(x)=\dot{x}^2/2+x^2/2+F(x)$, where $F(x)=\int_0^x\varphi$ is constant on each solution and then the set $\{(x,y):x>0, y^2=x^2-2F(x),0<x<1\}$ should be an important one in the nonwandering set, for example if $(1,0)$ is a center  this set would be a homoclinic orbit, but how can we conclude this, once $\varphi$ is not differentiable at $x=1$? To deal with dimension greater than one we consider as in previously example functions of type $A+\varphi$ where $A$ is a linear operator and $\varphi$ is Lipschitz with a small constant but not necessarily Lip$(\varphi)$ goes to zero when we shrink the domain (this implies differentiability). The control in Lip$(\varphi)$ involves the norm and mininorm of the operator $A$ it is exactly what we need to obtain the conditions of the standard  stable  and unstable manifold theorem \cite[Chapter 2]{Wen}. Thus one Lipschitz map which is not differentiable should produce interesting dynamics even if we start at point of non differentiability or if a fixed point is one point for which the differentially fails. 

In order to carry out our statement, we divided this paper in the following way. In Section \ref{Lipschitz norm} we define the Lipschitz norm and the concept of transvesality,  also explaining how it is possible to work in a compact manifold,  we present one version of the Witney's Extension Theorem  which states that in a compact manifold every Lipschitz function has a extension whose Lipschitz constants can be made arbitrarily close to the Lipschitz constant of the original function.  In Section \ref{Lipschitz hyperbolic fixed points} we deal with Lipschitz perturbation of an isomorphism, that is, function of type $A+\varphi$ where $A$ is a linear  operator and $\varphi$ is Lipschitz with small constant, we state conditions to obtain the Hartman-Grobmann's Theorem and the stable manifold theorem. Moreover we study the definition of Lipschitz hyperbolicity presented in \cite{lip_banach}. In Section \ref{Homoclinic and heteroclinic orbits} we obtain the main result of this work, a Lipschitz version of the Lambda Lemma of \cite{Palis1982}. The Section \ref{One dimensional dynamics} is exclusively for one dimensional dynamics. We improve and extend the result in \cite{Giuliano} in order to see the main features of maps defined in $\R$. We finish our paper with the Section \ref{Lyapunov Exponent} where we propose a definition of Lyapunov exponent for locally Lipschitz maps, which extends, in a natural way, the well known concept of Lyapunov exponent for differentiable maps. 

\section{Lipschitz norm}\label{Lipschitz norm}
In this section we define the Lipschitz norm and the concept of transversality with no differentiability requirement. When we work on compact manifold each Lipschitz function has a $C^1$ extension sufficiently closed in the Lipschitz norm. Therefore in this section we fixed the notation and states important results that will be used in the next sections. 
\subsection{Lipschitz norm and Transversality}
Let $(E,|\cdot|)$ be a  finite-dimensional normed vector space. We define the {\it Lipschitz seminorm} $\tn{Lip}(f)$ of a Lipschitz function $f:E\to E$ as the least Lipschitz constant for $f$, i.e.,
\begin{equation}\label{lip_seminorm}
\tn{Lip}(f):= \inf\{L > 0: L \tn{ is a Lipschitz constant for }f\}.
\end{equation}
The set of all  Lipschitz function is a vector spaces with respect to the pointwise operations of addition and multiplication by scalars [Lip$(f+g)\leq \tn{Lip}(f)+\tn{Lip}(g)$ and Lip$(\alpha f)=|\alpha|\tn{Lip}(f)$]. 
But \eqref{lip_seminorm} not define a norm in this space, in fact Lip$(f)=0$ implies that $f$ is constant. Of course we can consider $f(0)+\tn{Lip}(f)$  that yields a norm but we won't do that. After  we will use a more interesting strategy to make Lip$(f)$ a norm. [Lip$(f)$ is a norm in the space of the Lipschitz functions that satisfy $f(0)=0$]. Some simple properties of the Lipschitz seminorm are:
\begin{itemize}
\item[(i)] $|f(x)-f(y)|\leq\tn{Lip}(f)|x-y|$, for all $ x,y\in  E$.
\item[(ii)] If $L$  is a Lipschitz constant for $f$, then $\tn{Lip}(f)\leq L$.
\item[(iii)] It is valid the following characterization
\begin{equation}\label{characterization_Lip_Norm}
\tn{Lip}(f)=\sup\Big\{\frac{|f(x)-f(y)|}{|x-y|}:x,y\in E,x\neq y \Big\}.
\end{equation}
\end{itemize}
Recall that  a function $f:E\to E$ is called {\it locally Lipschitz} if for every $x\in E$ there exists a neighborhood $U_x$, of $x$ such that $f|_{U_x}$ is Lipschitz. Since continuity is a local property, a  locally Lipschitz function is continuous. There exist locally Lipschitz functions which are not Lipschitz as well. But any locally Lipschitz function is Lipschitz on every compact subset of $E$. We denote the Lipschitz constant of $f$ in $U_x$ by
$
\tn{Lip}(f,U_x).
$

In case of differentiable functions we have the following simple characterizations of the Lipschitz property.

\begin{proposition}\label{MVT}
Let $U$ be an open subset of $E$ and let $f:U\to E$ be a continuously differentiable function. Then $f$ is locally  Lipschitz, more precisely  if $x_0\in U$ and $r>0$  are such that $B_r(x_0)\subset \overline{B_r(x_0)}\subset U$, then 
$$
\tn{Lip}(f,B_r(x_0))=\sup\{|f^\prime(z)|:z\in B_r(x_0)\}.
$$
\end{proposition}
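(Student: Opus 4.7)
The plan is to prove the two inequalities separately using the convexity of the open ball $B_r(x_0)$, after first observing that the quantity $M := \sup\{|f'(z)| : z \in B_r(x_0)\}$ is finite. Finiteness follows from the hypothesis $\overline{B_r(x_0)} \subset U$ together with continuity of $f'$: the map $z \mapsto |f'(z)|$ (operator norm) is continuous on the compact set $\overline{B_r(x_0)}$, hence bounded, and its supremum dominates $M$.

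For the inequality $\mathrm{Lip}(f, B_r(x_0)) \le M$, I would take arbitrary $x, y \in B_r(x_0)$ and use convexity: the segment $\gamma(t) = y + t(x - y)$ lies in $B_r(x_0)$ for $t \in [0,1]$. Since $f$ is $C^1$, the fundamental theorem of calculus gives
\[
f(x) - f(y) = \int_0^1 f'\bigl(y + t(x-y)\bigr)(x-y)\, dt,
\]
and taking norms yields $|f(x)-f(y)| \le \bigl(\sup_{t \in [0,1]} |f'(\gamma(t))|\bigr)|x-y| \le M|x-y|$. In view of property (ii) listed after \eqref{lip_seminorm}, this gives $\mathrm{Lip}(f, B_r(x_0)) \le M$ and, in particular, local Lipschitzness of $f$.

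For the reverse inequality, fix any $z \in B_r(x_0)$ and any unit vector $v \in E$. Since $B_r(x_0)$ is open, $z + hv \in B_r(x_0)$ for all sufficiently small $h > 0$, and property (i) yields
\[
\left|\frac{f(z+hv)-f(z)}{h}\right| \le \mathrm{Lip}(f, B_r(x_0)).
\]
Letting $h \to 0^+$ produces $|f'(z)v| \le \mathrm{Lip}(f, B_r(x_0))$; taking the supremum over unit vectors $v$ gives $|f'(z)| \le \mathrm{Lip}(f, B_r(x_0))$, and finally the supremum over $z \in B_r(x_0)$ delivers $M \le \mathrm{Lip}(f, B_r(x_0))$.

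I do not expect a serious obstacle here: the argument is standard and relies only on convexity of balls, the integral form of the mean value theorem for $C^1$ maps between normed spaces, and the definition of the operator norm of $f'(z)$. The only point requiring a little care is the passage from the pointwise bound on difference quotients to the bound on $|f'(z)|$, which is why I first control the action of $f'(z)$ on individual unit vectors before taking the supremum to recover the operator norm.
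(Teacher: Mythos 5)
Your proof is correct and follows essentially the same route as the paper: the mean value inequality (which you justify via the integral form along the segment) for the upper bound, and the limit of difference quotients for the lower bound. The only difference is that you carry out the limiting step more carefully, passing through directional derivatives $|f'(z)v|$ for unit vectors $v$ before taking the supremum to recover the operator norm, whereas the paper simply writes ``taking $y\to x$''; your version is the rigorous form of the same idea.
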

\begin{proof}
Let $x_0\in U$ and $r>0$ such that $B_r(x_0)\subset \overline{B_r(x_0)}\subset U$. Then for all $x,y\in B_r(x_0)$, by Mean Value Theorem we have
$
|f(x)-f(y)|\leq \sup\{|f^\prime(z)|:z\in B_r(x_0)\} |x-y|,
$
hence Lip$(f,B_r(x_0))\leq\sup\{|f^\prime(z)|:z\in B_r(x_0)\}$ and $f$ is locally Lipschitz. Moreover, for $x,y\in B_r(x_0)$, $x\neq y$ we have 
$$
\frac{|f(x)-f(y)|}{|x-y|}\leq \tn{Lip}(f),
$$
taking $y\to x$ we obtain $|f^\prime(x)|\leq \tn{Lip}(f,B_r(x_0))$ for all $x\in B_r(x_0)$.
\end{proof}

Given a locally Lipschitz map $f:E\to E$ we define for $x_0\in E$ the {\it local Lipschitz norm} 
\begin{equation}\label{Lip_Norm}
\|f\|_{\tn{Lip},U}=\max \{ \|f\|_{C^0,U},\tn{Lip}(f,U)\},
\end{equation}
where $U$ is a bounded neighborhood of $x_0$ and $\|f\|_{C^0,U}=\sup\{|f(x)|:x\in \bar{U}\}$. Sometimes it is convenient to take $U=U_r=B_r(x_0)$, for some $r>0$. Of curse $\|f\|_{\tn{Lip},U}$ is not a norm in $E$ in the usual sense, since $f$ can be unbounded, in the next subsection we will replace $U$ by a compact manifold where in fact we will have a norm. The space of Lipschitz bound function with the norm \eqref{Lip_Norm} was considered in several works as \cite{GeraldBeer,Cobzas2019,Movahedi-Lankarani2003,D.Sherbert} each one with its specific goals. The local version \eqref{Lip_Norm} was presented in \cite{lip_banach}. If $f$ is a locally lipeomorphism (inverse is also locally Lipschitz) then we define
\begin{equation}
\|f\|_{\tn{Lip},U}=\max\{\|f\|_{C^0,U},\tn{Lip}(f,U),\|f^{-1}\|_{C^0,U},\tn{Lip}(f^{-1},U)\}.
\end{equation}
Notice that if two functions are close to one another in the Local Lipschitz norm then not only must the graphs be uniformly close but all inclinations of the secant lines must be close in the neighborhood considered. In fact the next lemma states that if eventually one of the functions is differentiable then we can change the Newton quotient by supremum of derivative in the neighborhood. Thus the local Lipschitz norm seems to be as close as possible to the $C^1$-norm if we do not require differentiability.

\begin{proposition}
Let $f$ be a Lipschitz map in a neighborhood $U$ of a point $p$. For any $\varepsilon>0$ there are $\delta,r>0$ such that if $g$ is  continuosly differentiable in $U$ such that $\|g-f\|_{\tn{Lip},U}\leq \delta$ then $|g'(x)-\tn{Lip}(f)|\leq \varepsilon$ on $B_r(p)\subset U$. 
\end{proposition}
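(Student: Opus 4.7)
The plan is to combine Proposition~\ref{MVT} with the sub-additivity of the Lipschitz seminorm. For a $C^{1}$ map $g$ on $U$, Proposition~\ref{MVT} identifies $\sup_{z \in B_r(p)} |g'(z)| = \tn{Lip}(g, B_r(p))$. Writing $g = f + (g - f)$ and applying sub-additivity of Lip on any ball $B_r(p) \subset U$,
\[
\bigl| \tn{Lip}(g, B_r(p)) - \tn{Lip}(f, B_r(p)) \bigr| \leq \tn{Lip}(g - f, B_r(p)) \leq \|g - f\|_{\tn{Lip}, U}.
\]
These two facts together reduce the statement to a direct comparison of real numbers, both controlled by $\delta$.

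In detail, given $\varepsilon > 0$, I would take any $r > 0$ with $\overline{B_r(p)} \subset U$ and set $\delta := \varepsilon$. For any $C^{1}$ competitor $g$ with $\|g - f\|_{\tn{Lip}, U} \leq \delta$ and any $x \in B_r(p)$, the chain
\[
|g'(x)| \leq \sup_{z \in B_r(p)} |g'(z)| = \tn{Lip}(g, B_r(p)) \leq \tn{Lip}(f, B_r(p)) + \delta
\]
delivers the uniform upper bound $|g'(x)| \leq \tn{Lip}(f, B_r(p)) + \varepsilon$. Reversing the final inequality shows $\sup_z |g'(z)| \geq \tn{Lip}(f, B_r(p)) - \varepsilon$, so at least one $x^{*} \in \overline{B_r(p)}$ satisfies $|g'(x^{*})| \geq \tn{Lip}(f, B_r(p)) - \varepsilon$. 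This already pins the full range of $|g'|$ inside the interval claimed by the proposition, in supremum form.

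The main obstacle I foresee is upgrading this lower bound into a genuine pointwise statement: the proposition asks for $|g'(x)|$ to be within $\varepsilon$ of $\tn{Lip}(f)$ at \emph{every} $x \in B_r(p)$, not merely at the maximiser. Because the modulus of continuity of $g'$ depends on the particular $g$ and not on $f$ alone, one cannot in advance fix $r$ so that $|g'|$ is nearly constant on $B_r(p)$ for \emph{every} admissible $g$. I would handle this by shrinking $r$ \emph{a posteriori} to a radius depending on $g$, so that by uniform continuity of $g'$ on the compact set $\overline{B_r(p)}$ one has $\bigl| |g'(x)| - |g'(x^{*})| \bigr| \leq \varepsilon/2$ throughout; combined with the already established closeness of $|g'(x^{*})|$ to $\tn{Lip}(f, B_r(p))$, this yields the pointwise conclusion. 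Absent such an $a$~$posteriori$ step, the natural reading is that $\tn{Lip}(f)$ abbreviates $\tn{Lip}(f, B_r(p))$ and the equality from Proposition~\ref{MVT} closes the argument at the level of the supremum.
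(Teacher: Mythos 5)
Your first two paragraphs are, in precise form, exactly the paper's argument: Proposition \ref{MVT} identifies $\sup_{B_r(p)}|g'|$ with $\tn{Lip}(g,B_r(p))$, and subadditivity of the seminorm gives $|\tn{Lip}(g,B_r(p))-\tn{Lip}(f,B_r(p))|\leq\tn{Lip}(g-f,U)\leq\delta$. The paper's entire proof is the one sentence that "the Lipschitz constants are sufficiently closed", i.e.\ precisely this supremum-level comparison, so up to that point you and the paper coincide and your version is the more careful one.

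The gap you flag in your last paragraph is genuine, and it sits in the proposition itself, not only in your write-up: the pointwise lower bound $|g'(x)|\geq\tn{Lip}(f)-\varepsilon$ for \emph{every} $x\in B_r(p)$ does not follow from the supremum statement. Concretely, take $U=(-1,1)$, $p=0$, $f(x)=x^2$ and $g=f$ (admissible for every $\delta$, since $\|g-f\|_{\tn{Lip},U}=0$); then $\tn{Lip}(f,U)=2$ while $g'(x)=2x$ is arbitrarily small on $B_r(0)$, so under the literal reading of $\tn{Lip}(f)$ the conclusion fails for every $r$ once $\varepsilon<2$. Your proposed repair, shrinking $r$ \emph{a posteriori} via uniform continuity of $g'$, does not close this: the quantifier order requires $\delta$ and $r$ to be fixed before $g$ is given, and shrinking $r$ changes both the target $\tn{Lip}(f,B_r(p))$ and the ball in which your maximiser $x^{*}$ was located. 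If one adopts the local reading $\tn{Lip}(f)=\tn{Lip}(f,B_r(p))$, the statement can be rescued, but by a different device: any two admissible $C^1$ maps $g_1,g_2$ satisfy $\tn{Lip}(g_1-g_2,U)\leq 2\delta$, hence $|g_1'(x)-g_2'(x)|\leq 2\delta$ pointwise by Proposition \ref{MVT}; so one fixes a single reference $g_0$ (if none exists the claim is vacuous), chooses $r$ from the modulus of continuity of $g_0'$, and transfers the estimate to every other admissible $g$ at the cost of $2\delta$. Neither the paper nor your proposal carries out this step, so as written both establish only the closeness of the Lipschitz constants, not the pointwise claim.
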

\begin{proof}
The proof is immediate from Proposition \ref{MVT} since $g$ is differentiable then $g$ is locally Lipschitz with constant given by supremum of the derivative, hence the Lipschitz constants are sufficiently closed. 
\end{proof}

The notion of transversality play an important role in differentiable dynamical systems, it is an essential assumption to obtain structural stability (\cite{Palis1968,Palis1070,Peixoto}). Recall that two differentiable submanifolds $S_1,S_2\subset E$ are {\it transverse  at} $x_0\in S_1\cap S_2$ denoted by $S_1\tpitchfork_{x_0} S_2$ when the tangent spaces at $x_0$ generates all $E$, that is, $T_{x_0}S_1\oplus T_{x_0}S_2=E$. They are transverse if it are in all $x\in S_1\cap S_2$. Since the above definition evolves the tangent spaces and transversality can be characterized by  inclusion maps, the assumption's differentiability is central here. There is a notion of tangent space for for topological submanifolds but this definition may not be a vector space. On the other hand, if we think a submanifold as locally graph (in fact it is) it is intuitive that we can draw transverse graphs  even when they do not represent differentiable functions. In fact in \cite{lip_banach} the authors proposed the following definition of Lipschitz tranversality for graphs of Lipschitz functions.

We say that two no-empty sets $W_1,W_2\subset E$ are {\it $L-$transversal at $x\in W_1\cap W_2$} if there exist vector subspaces $E_1,E_2\subset E$, with $E=E_1\oplus E_2$, a real number $r>0$ and two Lipschitz  functions $\theta\colon B_r^{E_1}(0)\to E_2$ and $\sigma\colon B_r^{E_2}(0)\to E_1$, with $\theta(0)=\sigma(0)=0$, ${\rm Lip}(\theta)<1$, ${\rm Lip}(\sigma)<1$ and
$$
\{x+\xi+\theta(\xi)\colon \xi\in B_r^{E_1}(0)\}\subset W_1 \quad \hbox{ and }\quad \{x+\sigma(\eta)+\eta\colon \eta \in B_r^{E_2}(0)\}\subset W_2.
$$
We denote it by $W_1\tpitchfork_{L,x} W_2$. If $W^1$ and $W^2$ are $L$-transversal for every $x\in W^1\cap W^2$, we say that $W^1$ and $W^2$ are $L$-transversal and denote it by $W^1\tpitchfork_{L} W^2$. Notice that as in the definition of transversality  if $W^1\cap W^2=\emptyset$ then $W^1$ and $W^2$ are $L$-transversal, by vacuity.

We can also see that if $S_1\subset E$ is a differentiable submanifold then $S_1$ is locally a graph of a $C^1$ map $\theta :E_1\to E_2$ such that, if dim$(S_1)=n_1$ then dim$(E_1)=n_1$, dim$(E_2)=n_2$  where dim$(E)=n_1+n_2$,  $E=E_1\oplus E_2$. Thus if $S_2\subset E$ is a differentiable submanifold transverse to $S_1$ at a point $x$, then $S_2$ is also  locally a graph of a $C^1$ map $\sigma :\tilde{E}_2\to \tilde{E}_1$ such that dim$(S_2)=n_2$, dim$(\tilde{E}_2)=n_2$, dim$(\tilde{E}_1)=n_1$ and we can identify $E_1=\tilde{E}_1=T_xS_1$ and $E_2=\tilde{E}_2=T_xS_2$, i.e, we can take $S_1$ and $S_2$ as complementary  locally graphs under their respective tangent space at transversality point $x$.  Moreover, since $\theta$ and $\sigma$ are $C^1$ and the submanifolds $S_1,S_2$ are tangent to $E_1,E_2$ respectively at $x$ then we can reduce the the neighborhood of $x$ in order to take the Lipschitz constants of $\theta,\sigma$ smaller than one. Hence with these identification we have transversality implies $L$-transversality.

The most important property about transversality is the stability under small $C^1$ perturbation. The next theorem is a Lipschitz version of this fact. 

\begin{theorem}[Openness of transversality]\label{Openness}
Let $E$ be a finite dimensional normed space having a splitting $E=E_1\oplus E_2$. Assume that there exist $r>0$ and Lipschitz functions $\theta,\tilde{\theta}\colon B_r^{E_1}(0)\to E_2$, $\sigma,\tilde{\sigma}\colon B_r^{E_2}(0)\to E_1$ with $\theta(0)=\sigma(0)=0$, ${\rm Lip}(\theta)<1$ and ${\rm Lip}(\sigma)<1$. Define the sets:
\begin{align*}
&M=\{y+\theta(y)\colon y\in {B_r^{E_1}(0)}\},\quad \quad\quad\quad N=\{\sigma(x)+x\colon x\in {B_r^{E_2}(0)}\}, \\
&
\tilde{M}=\{z+y+\tilde{\theta}(y)\colon y\in {B_r^{E_1}(0)}\} \hbox{ and } \ \tilde{N}=\{z+\tilde{\sigma}(x)+x\colon x\in {B_r^{E_2}(0)}\},
\end{align*}
where $z\in E$, and suppose also that there exists $0<c<1$ such that ${\rm Lip}(\theta)\leq c$, ${\rm Lip}(\sigma) \leq c$ and
\begin{equation} \label{eq:LipTodas}
  \|\theta(y)-\tilde{\theta}(y)\| \leq (1-c)\tfrac r2 \hbox{ and } \|\sigma(x)-\tilde{\sigma}(x)\|\leq (1-c)\tfrac r2 \hbox{ for all } y\in {B_r^{E_1}(0)}, \ x\in {B_r^{E_2}(0)}.
\end{equation}
If ${\rm Lip}(\tilde{\theta})<1$ and ${\rm Lip}(\tilde{\sigma})<1$ then there exists a point $y_0$ such that $\tilde{M}\tpitchfork_{L,y_0} \tilde{N}$.
\end{theorem}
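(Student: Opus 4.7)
The approach is to reduce the assertion to a single fixed-point problem and then shift coordinates. A point of $\tilde M$ has the form $z+y+\tilde\theta(y)$ for $y\in B_r^{E_1}(0)$, while a point of $\tilde N$ has the form $z+\tilde\sigma(x)+x$ for $x\in B_r^{E_2}(0)$. Since $E=E_1\oplus E_2$, the equation $z+y+\tilde\theta(y)=z+\tilde\sigma(x)+x$ splits into $y=\tilde\sigma(x)$ and $x=\tilde\theta(y)$, i.e.\ $y$ is a fixed point of $T:=\tilde\sigma\circ\tilde\theta$. I will produce such a $y$ via Banach's contraction principle, and then translate the graph parameterizations of $\tilde M$ and $\tilde N$ to the intersection point.

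For the fixed point, first observe that $\mathrm{Lip}(T)\le \mathrm{Lip}(\tilde\sigma)\cdot\mathrm{Lip}(\tilde\theta)<1$, so $T$ is a contraction wherever it is defined. Next I will verify that $T$ maps $\overline{B_r^{E_1}(0)}$ into itself. For $y\in \overline{B_r^{E_1}(0)}$, using $\theta(0)=0$, $\mathrm{Lip}(\theta)\le c$ and \eqref{eq:LipTodas},
\[
|\tilde\theta(y)|\le |\tilde\theta(y)-\theta(y)|+|\theta(y)|\le \tfrac{(1-c)r}{2}+c|y|\le \tfrac{(1+c)r}{2}<r,
\]
so the outer application of $\tilde\sigma$ is legitimate; then, using $\sigma(0)=0$, $\mathrm{Lip}(\sigma)\le c$ and \eqref{eq:LipTodas} once more,
\[
|T(y)|\le |\tilde\sigma(\tilde\theta(y))-\sigma(\tilde\theta(y))|+|\sigma(\tilde\theta(y))|\le \tfrac{(1-c)r}{2}+c\cdot\tfrac{(1+c)r}{2}=\tfrac{(1+c^2)r}{2}<r.
\]
Banach's theorem then provides a unique fixed point $y^*$ with $|y^*|<r$; setting $x^*:=\tilde\theta(y^*)$ one has $|x^*|<r$ and the intersection point $y_0:=z+y^*+x^*=z+\tilde\sigma(x^*)+x^*\in \tilde M\cap\tilde N$.

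To finish, I will package this intersection as an $L$-transversality at $y_0$. Choose $r'>0$ small enough that $y^*+B_{r'}^{E_1}(0)\subset B_r^{E_1}(0)$ and $x^*+B_{r'}^{E_2}(0)\subset B_r^{E_2}(0)$, possible because $y^*$ and $x^*$ are strictly interior. Define the shifted parameterizations
\[
\hat\theta(\xi):=\tilde\theta(y^*+\xi)-\tilde\theta(y^*),\qquad \hat\sigma(\eta):=\tilde\sigma(x^*+\eta)-\tilde\sigma(x^*),
\]
on $B_{r'}^{E_1}(0)$ and $B_{r'}^{E_2}(0)$ respectively. Then $\hat\theta(0)=\hat\sigma(0)=0$, $\mathrm{Lip}(\hat\theta)\le \mathrm{Lip}(\tilde\theta)<1$, and $\mathrm{Lip}(\hat\sigma)<1$; a direct substitution shows
\[
\{y_0+\xi+\hat\theta(\xi):\xi\in B_{r'}^{E_1}(0)\}\subset \tilde M,\qquad \{y_0+\hat\sigma(\eta)+\eta:\eta\in B_{r'}^{E_2}(0)\}\subset \tilde N,
\]
which is exactly $\tilde M\tpitchfork_{L,y_0}\tilde N$.

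The main obstacle is the invariance estimate for $T$: the hypothesis gives only $\mathrm{Lip}(\tilde\theta),\mathrm{Lip}(\tilde\sigma)<1$, with no quantitative bound by $c$, so iterating the perturbed functions directly does not visibly keep one inside $B_r^{E_1}(0)$. Routing the estimate through $\theta$ and $\sigma$ (which do satisfy the stronger bound $\mathrm{Lip}\le c$) and using \eqref{eq:LipTodas} to control the error is the crucial trick; the arithmetic then cooperates precisely because the $C^0$-perturbation allowance is exactly $(1-c)r/2$.
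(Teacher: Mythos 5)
Your proof is correct and takes essentially the same route as the paper's: both reduce the intersection of $\tilde M$ and $\tilde N$ to a fixed point of $\tilde\sigma\circ\tilde\theta$, obtain invariance of a ball by comparing $\tilde\theta,\tilde\sigma$ with $\theta,\sigma$ through the $C^0$-closeness hypothesis, apply a fixed-point theorem, and then recentre the graph parameterizations at the intersection point. The only cosmetic difference is that you run the invariance estimate on the closed ball $\overline{B_r^{E_1}(0)}$, which may exceed the stated domain $B_r^{E_1}(0)$ of $\tilde\theta$; working on $\overline{B_{r/2}^{E_1}(0)}$ (as the paper does, or any closed ball of radius in $[r/2,r)$) makes the identical arithmetic close up.
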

\begin{proof}
Let $K^1_r=\overline{B_{r/2}^{E_1}(0)}$ and $K^2_r=\overline{B_{r/2}^{E_2}(0)}$. Using \eqref{eq:LipTodas}, for $y\in K^1_r$ we have
$$
\|\tilde{\theta}(y)\|\leq \|\tilde{\theta}(y)-\theta(y)\|+\|\theta(y)\| \leq (1-c)\tfrac r2+c \tfrac r2  = \tfrac r2,
$$
and hence $\tilde{\theta}(K^1_r)\subset K^2_r$. Analogously, we obtain $\tilde{\sigma}(K^2_r)\subset K^1_r$.

We see that $\tilde{M}$ and $\tilde{N}$ have non-empty intersection if there exist $y\in K^1_r$ and $x\in K^2_r$ such that $y+\tilde{\theta}(y)=\tilde{\sigma}(x)+x$. The latter is true if there exists $y\in K^1_r$ such that $\tilde{\sigma}(\tilde{\theta}(y))=y$; that is, the map $g\colon K^1_r\to K^1_r$ given by $g(y)=\tilde{\sigma}(\tilde{\theta}(y))$ has a fixed point.

Clearly the map $g$ is well-defined, for if $y\in K^1_r$ then
$$
\|g(y)\| = \|\tilde{\sigma}(\tilde{\theta}(y)) - \sigma(\tilde{\theta}(y))\|+\|\sigma(\tilde{\theta}(y))-\sigma(0)\| \leqslant (1-c)\tfrac r2+c\tfrac r2 = \tfrac r2.
$$
By Brouwer's Fixed Point Theorem implies that $g$ has a fixed point in $K^1_r$ and hence $\tilde{M}\cap \tilde{N}\neq \emptyset$. Note that
$
\|g(y_1)-g(y_2)\| = \|\tilde{\sigma}(\tilde{\theta}(y_1))-\tilde{\sigma}(\tilde{\theta}(y_1))\| \leqslant {\rm Lip}(\tilde{\sigma}) {\rm Lip}(\tilde{\theta})\|y_1-y_2\|,
$
for all $y_1,y_2\in K^1_r$. Therefore $g$ is a contraction and has a unique fixed point $y_1\in K^1_r$ and defining $y_2=\tilde{\theta}(y_1)$ we have $y_0=y_1+y_2\in \tilde{M}\cap \tilde{N}$. Note that, by construction, we also have $y_1=\tilde{\sigma}(y_2)$. It remains to prove the $L$-transversality at $y_0$. To this end, firstly we choose $r_0>0$ such that $\overline{B_{r_0}^{X_1}(y_1)}\subset B_r^{X_1}(0)$ and $\overline{B_{r_0}^{X_2}(y_2)}\subset B_r^{X_2}(0)$. Now we define functions $\theta_\ast \colon B_{r_0}^{X_1}(0)\to X_2$ and $\sigma_\ast \colon B_{r_0}^{X_2}(0)\to X_1$ by $
\theta_\ast(y) = \tilde{\theta}(y+y_1) - y_2 = \tilde{\theta}(y+y_1) - \tilde{\theta}(y_1)$ and $\sigma_\ast(x) = \tilde{\sigma}(x+y_2)-y_1 = \tilde{\sigma}(x+y_2)-\tilde{\sigma}(y_2)$ for all $y\in B_{r_0}^{X_1}(0)$, $x\in  B_{r_0}^{X_2}(0)$. Therefore, the functions $\theta_\ast$ and $\sigma_\ast$ satisfy the $L$-transversal conditions, and hence $\tilde{M}\tpitchfork_{L,y_0} \tilde{N}$.
\end{proof}

\subsection{Lipschitz norm on compact manifold}\label{Lipschitz norm on compact manifold}
let $M$ be a compact and connected  $C^\infty$ Riemann manifold without boundary with a Riemannian metric $d$ embedded in a finite dimensional euclidian space $(E,|\cdot |)$. Since $M$ is compact every locally Lipschitz function admits a Lipschitz extension to $M$ hence we just consider globally Lipschitz functions on $M$. In addition we consider the Lebesgue measure acting in $M$, i.e., a null set in $M$ are those sets that are null set in $E$ with to respect to Lebesgue measure. As in \cite{Kentaro_1974} we assume  $\sup\{d(f(x),x):x\in M\}$  uniformly bounded for all Lipschitz functions $f$ on $M$ then a function that is Lipschitz with respect to $d$ is also Lipschitz  with respect to the Euclidean metric. Therefore  the Rademacher's Theorem applies and states that a Lipschitz function $f$ is differentiable a.e. on $\R^n$ hence differentiable a.e. on $M$.
In what follows we denote $\tn{Lip}(M)$ the set of all lipeomorphism of $M$ into itself with the norm 
\begin{equation}
\|f\|_{\tn{Lip}}=\max\{\|f\|_{C^0},\tn{Lip}(f),\|f^{-1}\|_{C^0},\tn{Lip}(f^{-1})\}.
\end{equation}
We denote $C(M)$ the set continuous function of $M$ into itself with $C^0$ norm. It is known that  the space $C(M)$ is separable if $M$ is a compact.  
Moreover the separability can be done with Lipschitz functions (see \cite{Cobzas2019}), i.e.,  the space $C(M)$ contains a countable dense subset formed of Lipschitz functions. We also consider $C^1(M)$ the set of all continuosly differentiable functions of $M$ into itself with $C^1$-norm  $\|f\|_{C^1}=\|f\|_{C^0}+\|f^\prime\|_{C^0}$.  It is clear that $C^1(M)\subset \tn{Lip}(M)$, more generally it follows from definition \eqref{lip_seminorm} and characterization \eqref{characterization_Lip_Norm} the following result.
\begin{proposition}Let $M$ be a compact $C^\infty$ Riemann manifold without boundary. Denote $\tn{Hom}(M)$ and $\tn{Diff}^1(M)$ the sets of all homeomorphisms and diffeomorphisms of $M$ into itself respectively. The  following inclusions are true
$\tn{Diff}^1(M)\hookrightarrow \tn{Lip}(M)\hookrightarrow \tn{Hom}(M)$ that is,
\begin{itemize}
\item[(i)] $\|f\|_{\tn{Lip}}\leq \|f\|_{C^1}$, for all $f\in\tn{Diff}^1(M)$.
\item[(ii)] $\|f\|_{C^0}\leq \|f\|_{\tn{Lip}}$, for all $f\in\tn{Lip}(M)$.
\end{itemize}
\end{proposition}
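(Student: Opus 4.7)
The plan is to handle the two inequalities separately, since they are of very different depth.

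Inequality (ii) is essentially tautological: by definition $\|f\|_{\tn{Lip}}$ is a maximum of four nonnegative quantities, one of which is $\|f\|_{C^0}$, so $\|f\|_{C^0}\leq \|f\|_{\tn{Lip}}$ is forced by the definition and no further argument is needed.

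Inequality (i) requires globalizing the local Euclidean result of Proposition \ref{MVT} to the compact Riemannian manifold $M$. My approach would be to take any two points $p,q\in M$ and join them by a length-minimizing geodesic $\gamma\colon[0,\ell]\to M$ with $\ell=d(p,q)$; such a $\gamma$ exists because $M$ is compact and connected (Hopf--Rinow). Then $f\circ\gamma$ is a piecewise $C^1$ curve in $M\subset E$, so the fundamental theorem of calculus gives
\[
d(f(p),f(q))\leq L(f\circ\gamma)=\int_0^\ell |(f\circ\gamma)'(t)|\,dt \leq \|f'\|_{C^0}\,\ell = \|f'\|_{C^0}\,d(p,q),
\]
where $\|f'\|_{C^0}$ is the supremum of the operator norm of $df_x$ over $x\in M$. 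This yields $\tn{Lip}(f)\leq \|f'\|_{C^0}\leq \|f\|_{C^1}$. The identical argument applied to $f^{-1}$, which is itself a $C^1$ map because $f\in\tn{Diff}^1(M)$, delivers $\tn{Lip}(f^{-1})\leq \|(f^{-1})'\|_{C^0}$; and $\|f\|_{C^0},\|f^{-1}\|_{C^0}$ are trivially bounded by the corresponding $C^1$ norms. Taking the maximum of the four quantities therefore gives $\|f\|_{\tn{Lip}}\leq \|f\|_{C^1}$, provided we adopt the standard convention on $\tn{Diff}^1(M)$ that the $C^1$ norm controls both $f$ and $f^{-1}$.

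The main obstacle I foresee is precisely the passage from the local Euclidean estimate of Proposition \ref{MVT} to a global Riemannian one: the Mean Value proposition as stated sits inside a single Euclidean ball and cannot be invoked for two arbitrary points of $M$ directly. Two routes around this are available: (a) pull back via a finite atlas of normal coordinate balls, supplied by compactness of $M$, and stitch the local Lipschitz constants using a Lebesgue number for the cover; (b) the geodesic route sketched above, which produces the sharp constant in one stroke. I would take route (b), since the Riemannian structure and compactness of $M$ already supply all the geometry needed and the rest of the argument reduces to bookkeeping with the four-term maximum defining $\|f\|_{\tn{Lip}}$.
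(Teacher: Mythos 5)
Your proposal is correct, and in fact it supplies more than the paper does: the paper offers no proof of this proposition at all, merely asserting that it ``follows from the definition of the Lipschitz seminorm and the characterization as a supremum of difference quotients.'' Part (ii) is indeed tautological, exactly as you say. For part (i), your geodesic argument (Hopf--Rinow plus the length estimate $d(f(p),f(q))\leq \int_0^\ell |(f\circ\gamma)'|\,dt\leq \|f'\|_{C^0}\, d(p,q)$) is the right way to globalize the Euclidean Mean Value estimate of Proposition \ref{MVT} to the compact Riemannian setting, and it is compatible with the paper's standing assumption in Subsection \ref{Lipschitz norm on compact manifold} that Lipschitz with respect to $d$ and with respect to the ambient Euclidean metric are interchangeable. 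Your route (b) is cleaner than the atlas-and-Lebesgue-number alternative and yields the sharp constant $\tn{Lip}(f)\leq\|f'\|_{C^0}$.

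One point deserves emphasis: the caveat you flag about $f^{-1}$ is not merely a convention to be adopted but a genuine defect in the statement as written. The paper defines $\|f\|_{\tn{Lip}}$ for a lipeomorphism as the maximum of four quantities, two of which are $\|f^{-1}\|_{C^0}$ and $\tn{Lip}(f^{-1})$, while $\|f\|_{C^1}=\|f\|_{C^0}+\|f'\|_{C^0}$ contains no information about the inverse. A diffeomorphism of $S^1$ whose derivative is equal to $\varepsilon$ on some arc has $\tn{Lip}(f^{-1})\geq 1/\varepsilon$ but bounded $\|f\|_{C^1}$, so inequality (i) fails literally. Your proof is valid exactly under the symmetrized convention you state (the $C^1$ norm on $\tn{Diff}^1(M)$ controls $f^{-1}$ as well, e.g.\ via a uniform lower bound on $m(df_x)$); without that convention the inclusion $\tn{Diff}^1(M)\hookrightarrow\tn{Lip}(M)$ should be read as a set inclusion rather than a norm inequality. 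Making that convention explicit is the only thing your write-up still needs.
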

An interesting question is whether $C^1(M)$ is closed in  $\tn{Lip}(M)$, when $M$ is compact. For infinite dimension the negative answer is given in \cite{Movahedi-Lankarani2003} where the authors exhibit an intermediate ring of polynomial functions between $C^1(M)$ and Lip$(M)$. But  if the dimension is  finite  the answer is (almost) affirmative, i.e., except for a set with small measure. In fact by $C^1$-Witney's Extension Theorem see \cite[Theorem 1 section 6.5]{Evansc} we have that if $f$ is a Lipschitz function, then $f$ admits a $C^1$ extension  $g$ such that $f=g$ and $Df=Dg$ except in a  set with small measure \cite[Theorem 1 subsection 6.1.1]{Evansc}. We will adapt the Witney's Theorem to ensure that in a compact manifold the extension $g$ is close to $f$ in Lipschitz norm. A more general version of this result for second countable Finsler manifold can be found in \cite{Garrido_2013}.

\begin{theorem}\label{Witney_extension}
Let $f: M\to M$ be a Lipschitz function,  then for $\varepsilon>0$ there is a $C^1$ function $g: M\to M$ such that $\|f-g\|_{\rm Lip}\leq \varepsilon$. 
\end{theorem}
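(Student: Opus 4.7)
The strategy is to reduce the theorem to the Euclidean $C^1$-Whitney extension theorem via the embedding $M\hookrightarrow E$, the compactness of $M$, and a patching argument. First I would fix a $C^\infty$ tubular neighbourhood $U$ of $M$ in $E$ with smooth retraction $\pi\colon U\to M$, and extend $f$ to a Lipschitz map $\tilde f = f\circ \pi\colon U\to E$. Because $\pi$ is smooth and $M$ is compact, $\tn{Lip}(\tilde f)\leq C\,\tn{Lip}(f)$ with $C$ depending only on $M$, so the problem is placed in the Euclidean setting in which the Whitney extension machinery is available.

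Second, I would combine Rademacher's theorem with a Lusin-type refinement: $D\tilde f$ exists almost everywhere, and for any $\eta>0$ Lusin's theorem produces a compact set $K\subset U$ with $\LL^n(U\setminus K)<\eta$ on which $D\tilde f$ agrees almost everywhere with a continuous field $\Phi$. Feeding the Whitney jet $(\tilde f|_K,\Phi|_K)$ into the classical Whitney extension theorem \cite[Theorem 1, section 6.5]{Evansc} produces a $C^1$ map $h\colon E\to E$ with $h=\tilde f$ and $Dh=\Phi$ on $K$, and with $\tn{Lip}(h)\leq \tn{Lip}(\tilde f)+\varepsilon$ by the quantitative form of the extension together with the uniform continuity of $\Phi$. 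Setting $g=\pi\circ h|_M$ produces a $C^1$ map $M\to M$. The bound $\|f-g\|_{C^0}\leq\varepsilon$ then follows from $f=g$ on $M\cap K$ of almost full measure, together with uniform continuity of both sides on the compact manifold $M$.

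To complete the Lipschitz estimate, I would use that $Df=Dg=\Phi$ at each point of $M\cap K$ and that $Dg$ is continuous on $M$, to propagate the agreement of derivatives to a small neighbourhood of $K$. A finite partition-of-unity argument then globalises the estimate: finitely many coordinate charts cover $M$ by compactness, and the bi-Lipschitz equivalence of the Riemannian and Euclidean metrics recalled in Subsection \ref{Lipschitz norm on compact manifold} transfers all bounds to the intrinsic setting, yielding $\tn{Lip}(f-g)\leq\varepsilon$. Combining with the $C^0$ estimate gives $\|f-g\|_{\tn{Lip}}\leq \varepsilon$.

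The main obstacle is precisely this last estimate. Evans' extension alone only guarantees $Df=Dg$ outside a set of small Lebesgue measure, a priori allowing $|Df-Dg|$ to be of order $\tn{Lip}(f)$ on that bad set and hence making $\tn{Lip}(f-g)$ large; the essential point, and what distinguishes the Lipschitz extension result on a compact manifold from the raw Whitney theorem in $\R^n$, is that one must extend from a Lusin set on which $Df$ is already continuous, so that the extended derivative $Dg$ stays close to $Df$ in a full neighbourhood of $K$ rather than only coinciding with it on $K$. Making this quantitative and uniform on $M$ is the technical core of the proof, and it is here that compactness of $M$ is indispensable.
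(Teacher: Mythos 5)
Your route is essentially the paper's own: push $f$ into the ambient Euclidean space (you via a tubular neighbourhood, the paper via McShane--Whitney applied coordinatewise), invoke Rademacher, pass to a Lusin set $K$ of almost full measure on which the derivative is continuous, apply the Whitney extension theorem to the jet on $K$, and then argue that smallness of $|M\setminus K|$ forces $\|f-g\|_{\rm Lip}\le\varepsilon$. One technical remark before the main point: feeding $(\tilde f|_K,\Phi|_K)$ into Whitney requires the first-order Taylor condition $|\tilde f(y)-\tilde f(x)-\Phi(x)(y-x)|=o(|x-y|)$ to hold \emph{uniformly} for $x,y\in K$; continuity of $\Phi$ on $K$ alone does not provide this, which is exactly why the paper inserts an Egoroff step (the functions $\eta_k$) between Lusin and Whitney. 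Your sketch would need the same refinement.

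The serious problem is the step you yourself identify as the technical core: the deduction of $\tn{Lip}(f-g)\le\varepsilon$. This step cannot be carried out, because the conclusion is false in general. The seminorm $\tn{Lip}(f-g)$ is a supremum of difference quotients over \emph{all} pairs of points; pairs lying in, or straddling, the exceptional set $M\setminus K$ contribute quotients as large as $\tn{Lip}(f)+\tn{Lip}(g)$ no matter how small $|M\setminus K|$ is. Smallness of measure does control the $C^0$ error (every point of $M$ lies within distance $O(|M\setminus K|^{1/n})$ of $K$), but it says nothing about slopes across the bad set. Concretely, if $f(x)=|x|$ near an interior point $0$ of a chart, then for any $C^1$ map $g$ the quotients $\tfrac{(f-g)(t)-(f-g)(0)}{t}$ tend to $1-g'(0)$ as $t\to0^+$ and to $-1-g'(0)$ as $t\to0^-$, so $\tn{Lip}(f-g)\ge\max\{|1-g'(0)|,|1+g'(0)|\}\ge1$: no $C^1$ function approximates $f$ in the Lipschitz seminorm. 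Your proposed remedy --- choosing $K$ so that $Dg$ stays close to $Df$ on a full neighbourhood of $K$ --- does not close the gap, since $Df$ need not exist off $K$, and in the example $\Phi=Df$ takes the values $+1$ and $-1$ at points of $K$ arbitrarily close to $0$, so no continuous derivative field can be uniformly close to it on both sides. (The paper's proof founders on the same rock: its final estimate bounds $|f(x)-f(x_j)-Dg(x)|$, which is not the difference quotient of $f-g$ between two arbitrary nearby points of $M\setminus C$.) What the construction genuinely yields is the weaker statement that $g$ is $C^0$-close to $f$, satisfies $\tn{Lip}(g)\le\tn{Lip}(f)+\varepsilon$, and agrees with $f$ together with its derivative off a set of measure $\varepsilon$; any use of Theorem \ref{Witney_extension} that truly needs $\tn{Lip}(f-g)\le\varepsilon$, such as the proof of Theorem \ref{lambda_lemma}, requires a different argument or a weaker notion of closeness.
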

\begin{proof}
First consider $f:M\to \mathbb{R}$. By  McShane-Withey's extension Theorem \cite[Theorem 2.3]{Juha_2004} we extend $f$ to $\mathbb{R}^n$ and by  Rademacher's Theorem \cite{Evansc,Juha_2004}, $f$ is differentiable on a set $A\subset \R^n$ such that $|\R^n\bac A|=0$ and then $|(\R^n\bac A)\cap M|=0$. Now by Lusin's Theorem \cite{Evansc,Cobzas2019}, there is a set $B\subset A\cap M$ such that $Df|_B$ is continuous and $|\R^n\bac B|< \varepsilon/2$. Set 
$$
R(x,y)=\frac{f(y)-f(x)-Df(x)(y-x)}{|x-y|},\quad x\neq y,\,\,x,y\in B, 
$$
$$
\eta_k(x)=\{R(x,y):y\in B,\,\, 0<|x-y|<1/k\}.
$$
Then by Ergoroff's Theorem there is a set $C\subset B$ such that $\eta_k\to 0$ uniformly on compact subsets of $C$ and $|B\bac C|\leq \varepsilon /2$. It follows from Witney's Extension  Theorem the existence of a $C^1$ extension $g$ of $f$ in $\R^n$ and then in $M$. In fact $g$ is given by
$$
g(x)=\begin{cases} f(x)& x\in C\\ \sum_{j=1}^m v_j(x)[f(s_j)+Df(s_j)(x-s_j)] & x\in M\bac C, 
\end{cases} 
$$
where $s_j \in C$ for $j=1,...,m$, $|x_j-s_j|=\tn{dist}(x_j,C)$ and  $\{x_j\}_j$ is given by Vitali's Covering Theorem is such that $M\bac C=\cup_j B_{r(x_j)}(x_j)$ for appropriate $r(x_j)>0$ moreover $\{v_j\}_j$ are smooth partition of unit in $M\bac C$. Thus if $x\in M\bac C$ then $x\in B_{r(x_i)}(x_j)$ for some $j$. Since $|M\bac C|\leq \varepsilon$ we can take $|x_j-s_j|\leq \varepsilon$ and for simplicity we assume $v_j=1$, thus 
\begin{align*}
|f(x)-g(x)|\leq |f(x)-f(s_j)|+|Df(x)||x-s_j|\leq \tn{Lip}(f)|x-s_j|+|Df(x)||x-s_j|\leq C\varepsilon.
\end{align*}
Moreover $g$ is continuosly differentiable and its derivative is given by
$$
Dg(x)=\sum_{x_j\in S_x}\{[f(s_j)+Df(s_j)(x-s_j)]Dv_j(x)+V_j(x)Df(s_j) \},
$$
where $S_x=\{x_j: B_{10r(x)}(x)\cap B_{10r(x_j)}(x_j)\neq\emptyset\}$. If $x\in M\bac C$ then $x\in B_{r(x_i)}(x_j)$ for some $j$ and for simplicity we assume $v_j=1$ and $Dv_j=0$, thus we have
\begin{align*}
|f(x)-f(x_j)-Dg(x)| &\leq |f(x)-f(s_j)+f(s_j)-f(x_j)-f(s_j)Df(s_j)(x-x_j)|\\
&+ |Df(s_j)(x-s_j)(x-x_j)|\\
&\leq \tn{Lip}(f)|x-x_j|+\tn{Lip}(f)|s_j-x_j|+C|x-s_j|\leq C\varepsilon.
\end{align*}
Rescaling $\varepsilon$ we obtain $\|f-g\|_{\rm Lip}\leq\varepsilon$. To finish the theorem notice that a function $f:M\to \R^n$ is Lipschitz if only if its coordinates function are Lipschitz so the theorem is true for  $f: M\to\R^n$. Finally if $f: M\to M$ is Lipschitz then for each $x\in M$ take a chart $u$ and  then apply the theorem for $u\circ f: M\to \R^n$ in order to obtain a extension $g: M\to \R^n$, thus $u^{-1}\circ g$ is a extension of $f$ in chart that can be extended to $M$ by Gluing Lemma for smooth function.  
\end{proof}

The Theorem \ref{Witney_extension}  will be the main tool in Section \ref{Homoclinic and heteroclinic orbits}  to transfer important results about differentiable dynamical systems  to Lipschitz dynamical systems.

\section{Lipschitz hyperbolic fixed points}\label{Lipschitz hyperbolic fixed points}
In this section we define the concept of Lipschitz hyperbolicity. We will  consider Lipschitz perturbation of a hyperbolic isomorphism, that is, function of type $A+\varphi$, where $A$ is an isomorphism and $\varphi$ is Lipschitz with small constant controlled by norm of $A$. In fact we present conditions to obtain the standards theorems of smooth dynamical system to our context. Here we consider that the dynamics occurs in a phase space of the dimension greater than one. The one dimensional case will be presented separately in Section \ref{One dimensional dynamics}.       

Let $(E,|\cdot|)$ be a  finite-dimensional normed vector space with dim$(E)>1$ and denote $\mathcal{L}(E)$ the set of all linear operator of $E$. For $A\in\mathcal{L}(E)$ we define the operator norm and mininorm of $A$ respectively by 
$
\|A\|_{\mathcal{L}}=\sup\{|Av|:|v|=1 \}\mbox{ and }  m(A)=\inf\{|Av|:|v|=1\}. 
$
Recall that a linear isomorphism $A\in\mathcal{L}(E)$ is called {\it hyperbolic} if there is a splitting of $E=E^s\oplus E^u$ such that 
$A(E^s)=E^s$ and $A(E^u)=E^u$ and there are constants $c>0$ and $\lambda\in (0,1)$ such that the following uniform exponential estimates hold
$$
|A^n u|\leq C\lambda^n |u|,\,\forall \,u\in E^s,\,n\geq 0,
$$
$$
|A^{-n} u|\leq C\lambda^n |u|,\,\forall \,u\in E^u,\,n\geq 0.
$$
We denote the {\it skewness} of $A$ by $\tau(A)=\max\{ \|A|_{E^s}\|_{\mathcal{L}},\|A^{-1}|_{E^u}\|_{\mathcal{L}}\}<1$.
\begin{definition}
Let $f:E\to E$ be a map. We say that a fixed point $p$ of $f$ is {\it Lipschitz hyperbolic} (L-hyperbolic) if there is a neighborhood $U$ of $p$ such that $f$ decompose $f=A+\varphi$ in $U$, where $A\in \mathcal{L}(E)$ is a hyperbolic linear isomorphism with splitting $E=E^s\oplus E^u$ and $\varphi$ is Lipschitz continuous such that
\begin{equation}\label{Lip_estimates}
\textnormal{Lip}(\varphi)<\min\Big\{\textstyle\dfrac{1-\tau(A)}{2},m(A)\Big\}.
\end{equation}
\end{definition}
In this case, for $r>0$ we define the {\it local and global stable manifold} respectively by
$$
W^s_r(p,f)=\{x\in U: |f^n(x)-p|\leq r,\textnormal{ and } \lim_{n\to\infty} f^n(x)=p \},
$$
$$
W^s(p,f)=\{x\in U:  \lim_{n\to\infty} f^n(x)=p \},
$$
if $f$ is invertible we define {\it local and global  unstable manifold} respectively by
$$
W^u_r(p,f)=\{x\in U: |f^{-n}(x)-p|\leq r,\textnormal{ and } \lim_{n\to\infty} f^{-n}(x)=p \}.
$$
$$
W^u_r(p,f)=\{x\in U:  \lim_{n\to\infty} f^{-n}(x)=p \}.
$$
We define the {\it  dimension} of stable and unstable manifolds as the dimension of $E^s$ and $E^u$ respectively. The proof of the next theorem is well known and can be found in \cite{hpg,katok,Robinson_chaos,shub,Wen}.

\begin{theorem}\label{stable_manifold_theorem}
If $p$ is a fixed L-hyperbolic fixed point of $f$ then the following statement are true:
\begin{itemize}
\item[(i)]  (Uniqueness of L-hyperbolic fixed point)  There is a neighborhood $U_r$ of $p$ such that if $|\varphi(0)|\leq(1-\tau(A)-{\rm Lip}(\varphi))r $ then $p$ is the unique fixed point of $f$ in $U_r$.
\item[(ii)]$A+\varphi$ is a Lipschitz homeomorphism with Lipschtz inverse satisfying \tn{Lip}$((A+\varphi)^{-1})\leq 1/[m(A)-\tn{Lip}(\varphi)]$.
\item[(iii)] (L-Hartman-Grobman theorem) There is a homeomorphism $h:U\to U$ that conjugates $A$ and $f$, that is  $h\circ f=A\circ h$. Therefore $W^s_r(p,f)$ and $W^u_r(p,f)$ are topological submanifold of $U$.
\item[(iv)] We have the following characterizations:
\begin{align*}
W^s_r(p,f)&=\{x\in U_r: |f^n(x)-p|\leq r, \forall n\geq 0\} \\
&=\{x\in U_r: |f^n(x)-p|\leq r\textnormal{ and }|f^n(x)-p|\leq\lambda^n|x-p|,  \forall n\geq 0  \},
\end{align*}
\begin{align*}
W^u_r(p,f)&=\{x\in U_r: |f^{-n}(x)-p|\leq r, \forall n\geq 0\} \\
&=\{x\in U_r: |f^{-n}(x)-p|\leq r\textnormal{ and }|f^{-n}(x)-p|\leq\lambda^n|x-p|,  \forall n\geq 0  \}.
\end{align*}
\item[(v)] (Isolation of L-Hyperbolic fixed point) If for $q\in U$ we have $f^n(q)-p\in U$  for all $n\in\mathbb{Z}$ then $q=p$.
\item[(vi)] (Global stable and unstable manifold theorem) There is a Lipschitz maps $\sigma:E^s+p\to E^u+p$ and $\theta:E^u+p\to E^s+p$  such that $\theta(p)=p=\sigma(p)$, \textnormal{Lip}$(\theta)<1$,\textnormal{Lip}$(\sigma)\leq 1$ and $W^s(p,f)\subset\{p+\sigma(\eta)+\eta:\eta\in E_s\}$,   $W^u(p,f)\subset\{p+\xi+\theta(\xi):\xi\in E_u \}$.  
\item[(vii)] (Local stable and unstable manifold theorem) The local manifolds  can also be characterized as in the previous item \tn{(vi)} as a graph of lipschtz maps in $U_r$.
\end{itemize} 
\end{theorem}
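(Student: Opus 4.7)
The theorem collects Lipschitz counterparts of classical hyperbolic fixed-point facts, and each step reduces to a contraction whose rate is controlled by $\tau:=\tau(A)$ and $\ell:={\rm Lip}(\varphi)$. I take $p=0$ throughout, write $f=A+\varphi$ on $U$, set $A_s=A|_{E^s}$, $A_u=A|_{E^u}$ and use the spectral projections $\pi_s,\pi_u$; by \eqref{Lip_estimates} we have $\ell<(1-\tau)/2$ and $\ell<m(A)$, and every estimate below is tuned to exploit exactly these two slack factors.

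I would dispose of (ii) first, since it is needed everywhere else: from $|Ax-Ay|\geq m(A)|x-y|$ and the triangle inequality, $|f(x)-f(y)|\geq(m(A)-\ell)|x-y|$, so $f$ is injective and its inverse on the image is $(m(A)-\ell)^{-1}$-Lipschitz. For (i) I recast $f(q)=q$ as a fixed-point problem for $T=T_s\oplus T_u$ on $U_r$, where $T_s(q)=(I-A_s)^{-1}\pi_s\varphi(q)$ and $T_u(q)=-(I-A_u^{-1})^{-1}A_u^{-1}\pi_u\varphi(q)$. Neumann-series estimates give that both blocks have operator norm at most $(1-\tau)^{-1}$, so ${\rm Lip}(T)\leq 2\ell/(1-\tau)<1$; the hypothesis $|\varphi(0)|\leq(1-\tau-\ell)r$ is precisely what makes $T(U_r)\subset U_r$, and Banach's principle supplies the unique fixed point.

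Parts (iv)--(vii) I treat uniformly via the Perron graph-transform. In the complete metric space $\Gamma^s$ of $1$-Lipschitz maps $\sigma\colon E^s\to E^u$ vanishing at $0$, I define $\mathcal{F}\sigma$ by the invariance identity $\mathcal{F}\sigma(A_s\xi+\pi_s\varphi(\xi+\sigma(\xi)))=A_u\sigma(\xi)+\pi_u\varphi(\xi+\sigma(\xi))$; \eqref{Lip_estimates} ensures that $\mathcal{F}$ preserves $\Gamma^s$ and contracts, and the unique fixed point is the global stable graph in (vi). The symmetric argument for $f^{-1}$ (available by (ii)) gives the unstable graph. The exponential bounds in (iv) follow from $|f^n(x)-p|\leq(\tau+\ell)^n|x-p|$ for $x$ on the stable graph, while (vii) is obtained by truncating the graphs to $U_r$. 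Isolation (v) is then immediate: a full $f$-orbit remaining in $U_r$ lies in $W^s_r\cap W^u_r$, which reduces to $\{p\}$ since the two graphs meet only at the origin.

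The Hartman--Grobman statement (iii) is the genuinely delicate piece. Following the Moser--Pugh scheme, I seek $h={\rm id}+u$ with $u\in C_b(U,E)$ solving the cohomological equation $A\circ u-u\circ f=\varphi$; splitting it by $\pi_s,\pi_u$ yields two fixed-point problems on $C_b(U,E)$ whose Lipschitz factor is bounded by $\tau+\ell/(m(A)-\ell)<1$ under \eqref{Lip_estimates}. The resulting continuous $u$ produces a continuous proper $h$; running the symmetric construction with the roles of $A$ and $f$ interchanged and invoking (ii) yields a two-sided inverse, so $h$ is a homeomorphism. \emph{Main obstacle:} part (iii) is the only step in which the classical proof uses the derivative of $f$ at $p$; here every estimate must be extracted directly from the Lipschitz smallness \eqref{Lip_estimates}, and the verification that the constructed $h$ is injective (rather than merely surjective) is what forces the use of the Lipschitz invertibility of $A+\varphi$ proved in (ii) -- precisely why (ii) is placed in the theorem alongside the linearization claim.
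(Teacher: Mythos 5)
Your architecture is the same as the paper's: every item is reduced to a Banach/Brouwer fixed-point problem, (vi)--(vii) via the graph transform, and (iii) via the cohomological equation $A\circ u-u\circ f=\varphi$ for $h=\mathrm{id}+u$ in the bounded continuous maps. The one place where your sketch actually goes wrong is the contraction constant in (iii): you claim the factor is $\tau+\ell/(m(A)-\ell)$ and that this is $<1$ ``under \eqref{Lip_estimates}.'' It is not: \eqref{Lip_estimates} only gives $\ell<m(A)$, so $\ell/(m(A)-\ell)$ can be arbitrarily large (take $m(A)$ small and $\ell$ just below it), and nothing in the hypotheses bounds $m(A)-\ell$ from below. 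The estimate that works -- and the one the paper uses -- is that after splitting, the two equations read $\psi_s=(A_s\psi_s-\varphi_s)\circ(A+\varphi)^{-1}$ and $\psi_u=A_u^{-1}\bigl(\varphi_u+\psi_u\circ(A+\varphi)\bigr)$; precomposition with the homeomorphism $(A+\varphi)^{\pm 1}$ is an isometry for the sup norm on $C_b$, so the contraction factor is simply $\tau(A)<1$ and no control on $m(A)-\ell$ is needed. As written, your argument makes the linearization depend on an inequality that is not among the hypotheses.

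Two further steps are asserted rather than proved. In (ii) you establish injectivity and the reverse-Lipschitz bound, but the statement requires $A+\varphi$ to be a homeomorphism, i.e.\ surjective onto $E$; this is exactly what the second condition ${\rm Lip}(\varphi)<m(A)$ buys, since for each $z$ the equation $(A+\varphi)v=z$ rewrites as $v=A^{-1}z-A^{-1}\varphi(v)$, a contraction with factor $m(A)^{-1}{\rm Lip}(\varphi)<1$ (alternatively, invariance of domain plus properness would do in finite dimension, but some argument must appear, and your (iii) and (v) both use the global inverse). In (iv), the graph transform only yields the inclusion of the stable graph into the set of points with forward orbit confined to $U_r$; the converse -- that any orbit staying in $U_r$ lies on the graph and contracts at rate $\tau+\ell$ -- is the substantive half and requires the cone argument (the paper's $1$-cones about $E^s$ and $E^u$) or an equivalent Perron uniqueness statement. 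Your deduction of (v) from (iv) then goes through, and your treatment of (i), (vi), (vii) is a correct minor variant of the paper's (Neumann-series normalization of the same fixed-point equation, and the same graph transform applied to $f$ and $f^{-1}$).
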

\begin{proof}
In the following we will recall some proof details in order to fix  notations that we will use later. It is important to observe that we will use only the estimates \eqref{Lip_estimates} and at no time we will require differentiability in $\varphi$. We also observe that we can consider the statements (iii) and (vii) in a global version, that is, $\varphi$ defined in all $E$ in fact if $\varphi$ is defined only in a neighborhood of $p$ we can take Lipschitz cutoff in order to extend $\varphi$ in all $E$ preserving the Lipschitz property.  Since $A$ is hyperbolic we fix the splitting $E=E^s\oplus E^u$, without loss of generality we assume $C=1$ (adapted norm)  and the  maximum norm in $E$. Let $r>0$ and denote $U=U_r=B_r(0)$.

(i) For $v\in E$ we have $(A+\varphi)v=v$ if only if $v_s=A_sv_s+\varphi_s(v)$ and $v_u=A_u^{-1}v_u-A_u^{-1}\varphi_u(v)$. This define a contraction $T$ with Lip$(T)\leq \tau(A)-$Lip$(\varphi)$ that maps $U_r$ into itself. Note that \eqref{Lip_estimates} ensure that  Lip$(T)<1$.

(ii) To see that $(A+\varphi)$ is injective take $v,z\in E$, then $(A+\varphi)v=z$  if only if $x=A^{-1}z+A^{-1}\varphi(x)$. This define a contraction $T$ with Lip$(T)\leq m(A)^{-1}$Lip$(\varphi)$. Moreover $|(A+\varphi)u-(A+\varphi)v|\geq [m(A)-\tn{Lip}(\varphi)]|u-v|$, for $u,v\in E$.

(iii) We have to find $h$ such that $h(A+\varphi)=Ah$. If we impose $h=I+\psi$, where $I$ is the identity in $E$, then $(I+\psi)(A+\varphi)=A(I+\psi)$ if only if $\varphi+\psi(A+\varphi)=A\psi$ if only if $\psi_s=(A_s\psi_s-\varphi_s)(A+\varphi)^{-1}$ and $\psi_u=A_u^{-1}(\varphi_u+\psi_u(A+\varphi))$. This define a contraction $\mathcal{T}$ in the space of bounded continuous map  with sup norm.  

(iv) We define the  $1$-cone about $E^s$ by
$
C_1(E^s)=\{v=(v_s,v_u) \in E : |v_u|\leq |v_s|\}. 
$
Without loss of generality we assume $p=0$, then  the characterization follows showing that 
\begin{align*}
W^s_r(0,f)&=\{x\in U_r: |f^n(x)|\leq r, \forall n\geq 0\} \\
&= \{x\in U_r: |f^n(x)|\leq r\tn{ and } f^n(x)\in C_1(E^s), \forall n\geq 0\}\\ 
&=\{x\in U_r: |f^n(x)|\leq r\tn{ and }|f^n(x)|\leq(\tau(A)+\tn{Lip}(\varphi))^n|x|,  \forall n\geq 0\}.
\end{align*}
In the same define the  $1$-cone about $E^u$ by
$
C_1(E^u)=\{v=(v_s,v_u) \in E : |v_s|\leq |v_u|\}. 
$
We have that
\begin{align*}
W^u_r(0,f)&=\{x\in U_r: |f^{-n}(x)|\leq r, \forall n\geq 0\} \\
&= \{x\in U_r: |f^{-n}(x)|\leq r\tn{ and } f^{-n}(x)\in C_1(E^u), \forall n\geq 0\}\\ 
&=\{x\in U_r: |f^{-n}(x)|\leq r\tn{ and }|f^{-n}(x)|\leq(\tau(A)+\tn{Lip}(\varphi))^n|x|,  \forall n\geq 0\}.
\end{align*}

(v) If $q\in U_r$ satisfies $|f^n(q)-p|\leq r$ for all $n\in\mathbb{Z}$ then $|q-p|\leq \lambda^{2n}|q-p|$.

(vi) Denote $\mathcal{F}=\{\sigma:E^u\to E^s:\sigma(0)=0,\,\,\tn{Lip}(\sigma)\leq 1 \}$ with the Lip norm Lip$(\sigma)$ since $\sigma(0)=0$ it define a norm that makes $\mathcal{F}$ a complete metric space. Define the graph transform $T:\mathcal{F}\to\mathcal{F}$ by $T\sigma=(A_s\sigma+\varphi_s(I_s+\sigma))(A_u+\varphi_u(I_u+\sigma))^{-1}$. With aid of \eqref{Lip_estimates} we can prove that $T$ is a well defined contraction in $\mathcal{F}$ .
\end{proof}

Now we present the main result of this section. Since the stable and unstable manifold are given by graphs, we denote $W^s_r(q_\eta)\to W^s_r(p)$ when the respective graphs are closed in the Lipschitz norm.

\begin{theorem}\label{convergence_of_manifolds}
Let $p$ be a fixed L-hyperbolic point of $f$ such that $f=A+\varphi$ in $U_r$ neighborhood of $p$. If $\varepsilon>0$ and $g$ is continuously differentiable in $U_r$  such that $\|g-f\|\leq \varepsilon$, then for $\varepsilon$ sufficiently small $g$ has a unique hyperbolic fixed point $q$ in $U_r$. Moreover if $\{g_\eta\}_\eta$ is a family of continuously differentiable maps  in $U_r$ such that $\|g_\eta-f\|_{\rm Lip}\to 0$ as $\eta\to 0$, then  $|q_\eta-p|\to 0$ and $W^s_r(q_\eta)\to W^s_r(p)$, $W^u_r(q_\eta)\to W^u_r(p)$ in the Lipschitz norm as $\eta\to 0$.
\end{theorem}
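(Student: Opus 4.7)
The plan is to reduce every assertion to the uniform contraction principle, applied in the appropriate Lipschitz category. Since $f = A + \varphi$ on $U_r$, I write $g = A + \psi$ with $\psi := g - A$; then $\|\psi - \varphi\|_{\rm Lip} = \|g-f\|_{\rm Lip} \leq \varepsilon$, so in particular $\text{Lip}(\psi) \leq \text{Lip}(\varphi) + \varepsilon$ and $|\psi(0) - \varphi(0)| \leq \varepsilon$. For $\varepsilon$ small enough, $\psi$ still satisfies \eqref{Lip_estimates} (and the smallness hypothesis on $|\psi(0)|$ in Theorem~\ref{stable_manifold_theorem}(i)), so Theorem \ref{stable_manifold_theorem}(i) applies to $g$ and produces a unique fixed point $q \in U_r$. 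That $q$ is hyperbolic in the classical $C^1$ sense follows because $g \in C^1$ forces $\psi \in C^1$, hence by Proposition~\ref{MVT} we have $\|D\psi(q)\|_{\mathcal{L}} \leq \text{Lip}(\psi) < 1-\tau(A)$, so $\|Dg(q) - A\|_{\mathcal{L}}$ is small and $Dg(q)$ is hyperbolic by the openness of hyperbolicity in the operator norm.

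For the family $\{g_\eta\}$, the convergence $|q_\eta - p| \to 0$ follows from the uniform contraction principle. Writing $g_\eta = A + \psi_\eta$, both $q_\eta$ and $p$ are the unique fixed points in $U_r$ of the contractions $T_{\psi_\eta}$ and $T_\varphi$ constructed component-wise in the proof of Theorem~\ref{stable_manifold_theorem}(i), namely $T_\varphi(v)_s = A_s v_s + \varphi_s(v)$ and $T_\varphi(v)_u = A_u^{-1}(v_u - \varphi_u(v))$, and analogously with $\psi_\eta$. These contractions share a common constant $c<1$ dictated by $\tau(A)$ and the uniform bound on $\text{Lip}(\psi_\eta)$, and they satisfy $\|T_{\psi_\eta} - T_\varphi\|_{C^0(U_r)} \leq C\|\psi_\eta - \varphi\|_{C^0} \leq C\|g_\eta - f\|_{\rm Lip} \to 0$. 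The standard estimate $|q_\eta - p| \leq (1-c)^{-1}\|T_{\psi_\eta} - T_\varphi\|_{C^0(U_r)}$ then yields $|q_\eta - p| \to 0$.

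To compare the invariant manifolds I invoke the graph representations of Theorem \ref{stable_manifold_theorem}(vi)--(vii): after translating by the respective fixed point, $W^s_r(p) - p$ and $W^s_r(q_\eta) - q_\eta$ are graphs of Lipschitz functions $\sigma, \sigma_\eta$ that arise as the unique fixed points of graph transforms $\mathcal{T}_\varphi, \mathcal{T}_{\psi_\eta}$ on the complete metric space $\mathcal{F} = \{\sigma\colon E^u\to E^s\,:\,\sigma(0)=0,\ \text{Lip}(\sigma)\leq 1\}$ endowed with the $\text{Lip}$ norm. By the proof of Theorem \ref{stable_manifold_theorem}(vi), both transforms are contractions on $(\mathcal{F}, \text{Lip})$ with a common constant, so the uniform contraction principle in Lipschitz norm reduces the manifold convergence to the uniform-in-$\sigma$ estimate $\text{Lip}(\mathcal{T}_{\psi_\eta}\sigma - \mathcal{T}_\varphi \sigma) \to 0$. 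This is in turn controlled by the composition estimate $\text{Lip}\bigl(\psi_\eta \circ (I+\sigma) - \varphi \circ (I+\sigma)\bigr) \leq \text{Lip}(\psi_\eta - \varphi)(1 + \text{Lip}(\sigma)) \leq 2\|g_\eta - f\|_{\rm Lip}$, together with a parallel bound for the Lipschitz inverse $(A_u + \psi_u(I_u+\sigma))^{-1}$ appearing in $\mathcal{T}$, whose existence and Lipschitz control come from Theorem \ref{stable_manifold_theorem}(ii). The symmetric argument with $E^s$ and $E^u$ interchanged handles the unstable manifold. The main obstacle is precisely this last step: propagating Lipschitz convergence through the composition and, especially, the inversion appearing in the graph transform while keeping all Lipschitz constants uniformly bounded in $\eta$, and absorbing the small shift $q_\eta - p$ (controlled in the previous paragraph) into a residual Lip term so that $\sigma_\eta$ and $\sigma$ are compared in a common ambient function space.
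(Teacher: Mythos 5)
Your first two paragraphs follow essentially the same route as the paper: reduce the fixed-point existence to Theorem \ref{stable_manifold_theorem}(i) and get $|q_\eta-p|\to 0$ from the (uniform) contraction principle applied to the component-wise operator $v_s=A_sv_s+\varphi_s(v)$, $v_u=A_u^{-1}v_u-A_u^{-1}\varphi_u(v)$. The only difference is the decomposition: you keep the linear part $A$ and set $\psi=g-A$, while the paper writes $g=B+\psi$ with $B=g'(p)$ and then transfers hyperbolicity from $A$ to $B$ via $\|A-B\|_{\mathcal L}\leq\varepsilon$; your variant is arguably cleaner since it avoids having to argue that $g'(p)$ is close to $A$ (which, strictly speaking, is only true up to ${\rm Lip}(\varphi)+\varepsilon$, not $\varepsilon$). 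For the manifold convergence you diverge from the paper: you difference the graph transforms $\mathcal T_{\psi_\eta}$ and $\mathcal T_\varphi$ and invoke the uniform contraction principle on $(\mathcal F,{\rm Lip})$, whereas the paper first normalizes (translating $q_\eta$ to $p=0$ and flattening $W^u(p)$ to $E^u$, so that the limit graph is $\sigma\equiv 0$) and then estimates $\|\sigma_\eta\|_{C^0}$ and ${\rm Lip}(\sigma_\eta)$ directly from the invariance equations $h_\eta(\xi)=A_u\xi+\varphi_{\eta,u}(\xi+\sigma_\eta(\xi))$, $\sigma_\eta(h_\eta(\xi))=A_s\sigma_\eta(\xi)+\varphi_{\eta,s}(\xi+\sigma_\eta(\xi))$.

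The step you flag as the ``main obstacle'' --- propagating Lipschitz convergence through the inversion $(A_u+\psi_{\eta,u}(I_u+\sigma))^{-1}$ inside the graph transform --- is a genuine difficulty of your route, and you leave it unproven: the map $F\mapsto F^{-1}$ is well behaved in $C^0$ but is not Lipschitz as a map of the ${\rm Lip}$-seminorm, so the estimate ${\rm Lip}(\mathcal T_{\psi_\eta}\sigma-\mathcal T_\varphi\sigma)\to 0$ uniformly in $\sigma$ does not follow from the composition bound alone. The paper's device for exactly this point is worth noting: it never writes $h_\eta^{-1}$ at all. Instead it uses the reverse-Lipschitz bound $|h_\eta(\xi_1)-h_\eta(\xi_2)|\geq (m(A_u)-2\,{\rm Lip}(\varphi))\,|\xi_1-\xi_2|$ to convert an estimate of $|\sigma_\eta(h_\eta(\xi_1))-\sigma_\eta(h_\eta(\xi_2))|$ in terms of $|\xi_1-\xi_2|$ into a bound on ${\rm Lip}(\sigma_\eta)$ over the image of $h_\eta$, and the invariance equation supplies the former directly in terms of $\tau(A)$, ${\rm Lip}(\sigma_\eta)$ and $K_\eta={\rm Lip}(\varphi_{\eta,s}-\varphi_{0,s})+\|\varphi_{0,s}\|_{C^0}$, yielding a self-improving inequality ${\rm Lip}(\sigma_\eta)\leq K_\eta/(m(A_u)-\tau-2{\rm Lip}(\varphi)-K_\eta)\to 0$. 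If you want to keep your uniform-contraction framing, you should either adopt this trick or restrict the comparison to the $C^0$ part of the norm and handle the ${\rm Lip}$ part separately as the paper does; as written, the last paragraph identifies the problem but does not solve it.
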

\begin{proof}
We assume $p=0$. Since $g$ continuosly differentiable  and we write $g=B+\psi$, where $B=g^\prime(0)$ and $\varphi$ is such that Lip$(\varphi)$ is sufficiently small to satisfy \eqref{Lip_estimates}, thus by Theorem \ref{stable_manifold_theorem} item (i) $g$ ha a unique fixed point $q$ in a neighbohood of zero. Moreover since $\|A-B\|_{\mathcal{L}}\leq \varepsilon$ it is well known that the hyperbolicity of $A$ transfer to $B$, hence $q$ is a hyperbolic point for $g$. Now denote $g_\eta=A_\eta+\varphi_\eta$ if $q_\eta$ is the hyperbolic fixed point of $g_\eta$  then by Theorem \ref{stable_manifold_theorem} item (i) $q_\eta$ is the fixed point of operator $v_{\eta,s}=A_{\eta,s}v_{\eta,s}+\varphi_{\eta,s}(v)$ and $v_{\eta,u}=A_{\eta,u}^{-1}v_{\eta,u}-A_{\eta,u}^{-1}\varphi_{\eta,u}(v)$. The convergence $|q_\eta-p|\to 0$ follows noticing that $\|g_\eta-f\|_{\rm Lip}\to 0$ as $\eta\to 0$ implies  $A_\eta\to A$ in operator norm and $\varphi_\eta\to \varphi$ in Lipschitz seminorm. For the last part we will present the proof for the unstable manifold. Again we can consider $p=0=q_\eta$ and the global manifolds. Besides that making a translation by a lipeomorphism we can consider $W^u(0)=E^u$ and to avoid excessive notation we consider $A_\eta=A$. The perturbed unstable manifolds are given by a family of maps $\sigma_\eta: E^u\to E^s$ with ${\rm Lip}(\sigma_\eta)<1$ such that $W^{\rm u,\eta}(0)=\{\xi+\sigma_\eta(\xi): \xi \in E^u\}$, for sufficiently small $\eta$, with $\sigma_0=\sigma=0$, and
\begin{equation*}
\left\{
\begin{aligned}
&h_{\eta}(\xi):=A_u\xi+\varphi_{\eta,u}(\xi+\sigma_\eta(\xi))\\
&\sigma_{\eta}(h_{\eta}(\xi))=A_s\sigma(\xi)+\varphi_{\eta,s}(\xi+\sigma_\eta(\xi))
\end{aligned}\right.\quad \hbox{ for }\xi \in E^u.
\end{equation*}
We have to proof that the norm $\|\sigma_\eta\|_{\rm Lip}\to 0$ as $\eta\to 0^+$. We know that $h_\eta$ is invertible and if we denote $\tau(A)=\tau$ and Lip$(\varphi)=\gamma$ we have
$$
|h_\eta(\xi_1)-h_\eta(\xi_2)|\geq (\tau-2\gamma)|\xi_1-\xi_2|, \hbox{ for all }\xi_1,\xi_2\in E^u \hbox{ and small }\eta.
$$
Since $\varphi_{0,s}(\xi)=0$ for each $\xi\in E^u$ we have
\begin{align*}
|\sigma_\eta(h_\eta(\xi))|&\leq \tau|\sigma_\eta(\xi)|+|\varphi_{\eta,s}(\xi+\sigma_\eta(\xi))-\varphi_{0,s}(\xi+\sigma_\eta(\xi))|\\
& \qquad +|\varphi_{0,s}(\xi+\sigma_\eta(\xi))-\varphi_{0,s}(\xi)|\\
&\leq (\tau+\gamma) \|\sigma_\eta\|_{C^0}+\|\varphi_{\eta}-\varphi_{0}\|_{C^0},
\end{align*}
thus we have $\|\sigma_\eta\|_{C^0} \leq \frac{1}{1-\tau-\gamma}\|\varphi_{\eta}-\varphi_{0}\|_{C^0}$ and therefore $\|\sigma_\eta\|_{C^0} \to 0$ as $\eta\to 0^+$.

Now for $r_\eta=\|\sigma_\eta\|_{C^0}$ and $K_\eta=\|\varphi_{0,s}\|_{C^0}+\tn{Lip}(\varphi_{\eta,s}-\varphi_{0,s})$ we have
\begin{align*}
|\sigma_\eta(h_\eta(\xi_1))-\sigma_\eta(h_\eta(\xi_2))| & \leq
|A_s(\sigma_\eta(\xi_1)-\sigma_\eta(\xi_2))+\varphi_{\eta,s}(\xi+\sigma_\eta(\xi))-\varphi_{\eta,s}(\xi_2+\sigma_\eta(\xi_2))|\\
& \leq \tau\;\tn{Lip}(\sigma_\eta)|\xi_1-\xi_2|+K_\eta|(1+\tn{Lip}(\sigma_\eta))|\xi_1-\xi_2|\\
& \leq [(\tau+K_\eta)\tn{Lip}(\sigma_\eta)+K_\eta]|\xi_1-\xi_2|\\
& \leq \dfrac{(\tau+K_\eta)\tn{Lip}(\sigma_\eta)+K_\eta}{\tau-2\gamma}|h_\eta(\xi_1)-h_\eta(\xi_2)|,
\end{align*}
and hence
$$
{\rm Lip}(\sigma_\eta) \leq \frac{K_\eta}{\tau^{-1}-\tau-2\gamma-K_\eta} \to 0 \hbox{ as } \eta\to 0^+.
$$
which concludes the result, since $K_\eta \to 0$ as $\eta\to 0^+$.
\end{proof}

\section{Homoclinic and heteroclinic orbits}\label{Homoclinic and heteroclinic orbits}

Throughout this section we will consider $M$ a Riemann manifold and $f$ a Lipschitz function as in Subsection \ref{Lipschitz norm on compact manifold}. For  a fixed point $p\in M$ of $f$, we denote $0_p$ the corresponding null vector in tangent space $T_pM$ of $M$ at $p$ and we define the {\it self-lifting} of $f$ by the function $F_f: T_pM\to T_pM$ where $F_f(v)=\exp_p^{-1}   f \exp_p(v)$, for all $v\in T_pM$.

\begin{definition}
Let $f:M\to M$ be a function. We say that a fixed point $p$ of $f$ is {\it Lipschitz hyperbolic} (L-hyperbolic) if there is a neighborhood $U$ of $0_p$ in $T_pM$ such that $F_f$ decompose as $F_f=A+\varphi$ in $U$, where $A\in \mathcal{L}(E)$ is a hyperbolic linear isomorphism with splitting $E=E^s\oplus E^u$ and $\varphi$ is Lipschitz continuous satisfying \eqref{Lip_estimates}.
\end{definition}

If $p$  is a $L$-hyperbolic fixed point for $f$ then we can apply the Theorem \ref{stable_manifold_theorem} for $F_f$ and then obtain the same properties for $f$ once we have the stable and unstable manifold as projection of the exponential map, that is $W_r^s(p,f)=\exp_p(W_r^s(0_p,F_f))$ and $W_r^u(p,f)=\exp_p(W_r^u(0_p,F_f))$.

The next theorem is a Lipschitz  version of $\lambda$-Lemma \cite[Lemma 7.1, Chapter 2]{Palis1982}.

\begin{theorem}\label{lambda_lemma} Let $p$ be a $L$-hyperboic fixed point of  $f$. If $D$ is a submanifold such that $D\tpitchfork_{L,x} W^s(p,f)$ for some point $x\in D\cap W^s(p,f)$ and $\tn{dim}(D)=\tn{dim}(W^u(p,f))$, then  for $\varepsilon>0$ there is  $n_0\in \N$ such that $f^n(D)$ is $\varepsilon$-close to $W^u(p,f)$ in the \tn{Lip} norm for $n\geq n_0$.
\end{theorem}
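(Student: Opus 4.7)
The plan is to reduce the Lipschitz statement to the classical $C^1$ $\lambda$-Lemma of Palis and de Melo by approximating both the dynamics $f$ and the transverse disk $D$ by smooth objects, and then transferring the conclusion back via the openness of $L$-transversality (Theorem \ref{Openness}) and the continuity of the invariant manifolds under Lipschitz perturbations (Theorem \ref{convergence_of_manifolds}).

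First, I would apply Theorem \ref{Witney_extension} to obtain, for an arbitrarily small $\delta>0$, a $C^1$ map $g\colon M\to M$ with $\|f-g\|_{\tn{Lip}}\le\delta$. Theorem \ref{convergence_of_manifolds} then supplies, for $\delta$ small enough, a unique hyperbolic fixed point $q$ of $g$ near $p$, with $W^s_r(q,g)\to W^s_r(p,f)$ and $W^u_r(q,g)\to W^u_r(p,f)$ in the Lipschitz norm as $\delta\to 0$. In parallel I would smooth the disk $D$: the $L$-transversality at $x$ realizes $D$ locally as the graph of a Lipschitz map $\theta$ with $\tn{Lip}(\theta)<1$; applying Theorem \ref{Witney_extension} componentwise to $\theta$ produces a $C^1$ map $\tilde\theta$ with $\tn{Lip}(\tilde\theta)<1$ and $\|\theta-\tilde\theta\|_{\tn{Lip}}$ arbitrarily small, so its graph $\tilde D$ is a $C^1$ submanifold Lipschitz-close to $D$ with the correct dimension $\dim\tilde D=\dim W^u(p,f)$.

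Next I would invoke Theorem \ref{Openness} on the pairs $(D,W^s_r(p,f))$ and $(\tilde D,W^s_r(q,g))$ to obtain a point $y_0$ at which $\tilde D\tpitchfork_{L,y_0}W^s_r(q,g)$; because both $\tilde D$ and $W^s_r(q,g)$ are now $C^1$, this $L$-transversality coincides with classical transversality at $y_0$. The classical $C^1$ $\lambda$-Lemma \cite[Lemma 7.1]{Palis1982} then applies to the $C^1$ diffeomorphism $g$, the hyperbolic fixed point $q$, and the $C^1$ disk $\tilde D$ of matching dimension, yielding an integer $n_0$ such that $g^n(\tilde D)$ is arbitrarily close to $W^u_r(q,g)$ in $C^1$-norm, hence in $\|\cdot\|_{\tn{Lip}}$, for every $n\ge n_0$. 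Combining this with the comparison $W^u_r(q,g)\approx W^u_r(p,f)$ gives that $g^n(\tilde D)$ lies within any prescribed Lipschitz tolerance of $W^u(p,f)$ for all $n\ge n_0$.

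The remaining and most delicate step is transferring $g^n(\tilde D)\to f^n(D)$. Since $\|f^n-g^n\|_{\tn{Lip}}$ can grow exponentially with $n$, a single choice of $\delta$ cannot control the whole orbit $n\ge n_0$. I would split the estimate in two stages: for the fixed index $n=n_0$, a sufficiently small $\delta$ and Lipschitz distance between $D$ and $\tilde D$ force $f^{n_0}(D)$ to be Lipschitz-close to $g^{n_0}(\tilde D)$, hence to $W^u_r(p,f)$; for $n>n_0$, one exploits that $f^{n_0}(D)$ now sits in a thin tubular neighborhood of $W^u_r(p,f)$ as a Lipschitz graph with slope strictly less than $1$, so the graph transform $T$ appearing in the proof of Theorem \ref{stable_manifold_theorem}(vi) acts on it and contracts with rate governed by $\tau(A)+\tn{Lip}(\varphi)<1$. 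The contraction then forces subsequent iterates to remain (and become) Lipschitz-close to the unique fixed graph, namely $W^u_r(p,f)$. This graph-transform step is the technical heart of the argument; it is precisely where the Lipschitz hyperbolicity bound \eqref{Lip_estimates} is used to guarantee contraction in the $\tn{Lip}$-norm, rather than merely in $C^0$.
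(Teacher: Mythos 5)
Your proposal follows essentially the same route as the paper: approximate $f$ by a $C^1$ map $g$ via Theorem \ref{Witney_extension}, use Theorem \ref{Openness} to transfer the $L$-transversality of $D$ with $W^s(p,f)$ to a classical transversal intersection with $W^s(q,g)$, invoke the classical $\lambda$-Lemma for $g$, and pull the conclusion back to $f$ with Theorem \ref{convergence_of_manifolds}. The two places where you deviate are in fact places where you are \emph{more} careful than the paper. First, you smooth the disk $D$ to a $C^1$ graph $\tilde D$ before applying the classical $\lambda$-Lemma; the paper applies \cite[Lemma 7.1]{Palis1982} to $D$ itself, which is only a Lipschitz graph, so your extra step is not a detour but a needed repair (the $C^1$ statement requires a $C^1$ disk). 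Second, and more importantly, you explicitly confront the fact that $\|f^n-g^n\|_{\tn{Lip}}$ can grow exponentially in $n$, whereas the paper simply asserts that $\|f-g\|_{\rm Lip}\leq\varepsilon$ makes $f^n(D)$ Lipschitz $\varepsilon$-close to $g^n(D)$ for all $n\geq n_0$, which is not justified as stated. Your two-stage remedy --- control the single iterate $n_0$ by shrinking $\delta$, then run the graph transform of Theorem \ref{stable_manifold_theorem}(vi) directly on $f^{n_0}(D)$ using the contraction furnished by \eqref{Lip_estimates} --- is the standard way the inclination lemma is actually proved and is the right fix, though you should still verify that (a piece of) $f^{n_0}(D)$ is genuinely a graph with slope less than one over a ball in $E^u$ and that the unstable expansion eventually makes these graphs cover the full ball $B^{E^u}_r$; that domain bookkeeping is the one point your sketch leaves implicit.
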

\begin{proof}
Since $p$ is $L$-hyperbolic then $f$ is Lipschitz in $M$. Given $\varepsilon>0$ by Theorem \ref{Witney_extension} there is $g\in C^1(M)$ such that $\|f-g\|_{\rm Lip}\leq \varepsilon$ and it follows from Theorem \ref{stable_manifold_theorem} that $g$ has a unique hyperbolic fixed point $q$ in a neighborhood of $p$.  If  $D$ is a submanifold such that $D\tpitchfork_{L,x} W^s(p,f)$ for some point $x\in D\cap W^s(p,f)$ we claim that $D\tpitchfork_{y_0} W^s(q,g)$ for some point $y_0\in D\cap W^s(q,g)$. In fact, we take the splitting $E=E^u\oplus E^s$ given by hyperbolicity of $p$ and make a translation of the splitting of $q$ so that we can write
$$
y+D=\{y+\sigma(\eta)+\eta: \eta \in E^u_r\},\quad W^s(p)=\{x +\xi+\theta(\xi): \xi \in E^s_r \}, 
$$
$$
x+D=\{x+\sigma(\eta)+\eta: \eta \in E^u_r\},\quad W^s(q)=\{y +\xi+\tilde{\theta}(\xi): \xi \in E^s_r \}, 
$$
where  $E^s_r, E^s_r$ denotes a neighborhood of the origin in $E^s$ and $E^u$ respectively, $y\in M$ and $\sigma:E^u_r\to E^s$, $\theta : E^s_r\to E^u$ and $\tilde{\theta} : E^s_r\to E^u$ are Lipschitz with constant smaller than $1$. It follows from Theorem \ref{Openness} that there is $y_0\in D\cap W^s(q,g)$ such that $D\tpitchfork_{y_0} W^s(q,g)$. 

Now we  apply the $\lambda$-Lemma in $g$ in order to conclude the existence of a submanifold $\tilde{D}$ and $n_0\in \N$ such that $g^n(D)$ contains $\tilde{D}$ that  is $C^1$ $\varepsilon$-closed to $W^u(q,g)$ for all $n>n_0$. Since $W^s(q,g) \tpitchfork_{q} W^u(q,g)$ and transversality is stable under $C^1$ small perturbation we have $\tilde{D}\tpitchfork_{z_0} W^s(q,g)$ for some $z_0\in g^n(\tilde{D})\cap W^s(q,g)$. But transversality implies $L$-transversality thus with the same argument above we can use again the Theorem \ref{Openness} to ensure the existence of a point $w_0\in g^n(\tilde{D})\cap W^s(p,f)$ such that $\tilde{D}\tpitchfork_{L,w_0} W^s(p,f)$. But $f$ is Lipschitz $\varepsilon$-closed to $g$ then $f^n(D)$  is Lipschitz $\varepsilon$-closed to $g^n(D)$ for $n\geq n_0$ and the Theorem \ref{convergence_of_manifolds} ensures that the unstable and stable manifolds are Lipschitz closed, hence $f^n(D)$  is Lipschitz $\varepsilon$-closed to $W^u(p,f)$.
\end{proof}
\begin{remark}
If $f$ is a lipeomorphism then the same statement of Theorem \ref{lambda_lemma} is valid for unstable manifold, that is, the backward evolution of a submanifold $L$-transverse to the unstable manifold will be sufficiently closed in Lipschitz norm to stable manifold.   
\end{remark}
Let $p$ be a $L$-hyperbolic point of $f$. We say $x\in E$ is a {\it L-homoclinic poin}t of $p$ if $x\neq p$ and  $x\in W^s(p,f)\cap W^u(p,f)$. A $L$-homoclinic point $x$ of $p$ is called $L$-{\it transverse} if $W^s(p,f)\tpitchfork_{L,x} W^u(p,f)$. We say that two $L$-hyperbolic fixed points are {\it $L$-homoclinically related} if there is $x,y$, $x\neq y$ such that $W^s(p,f)\tpitchfork_{L,x} W^u(q,f)$ and $W^s(q,f)\tpitchfork_{L,y} W^u(p,f)$. Recall that $f$ has chaotic behavior in a set $V$ if there is a compact $\Lambda\subset V$ and $m>0$ such that $f^m:\Lambda\to \Lambda$ is topologically conjugated to shift $\sigma_2:\Sigma_2\to \Sigma_2$ where $\Sigma_2=\prod_{-\infty}^\infty A_n $, $A_n=\{0,1\}$ for $n\in \mathbb{Z}$.

The next theorem is a Lipschitz version of the Birkhoff-Smale Theorem \cite{SMALEg}.

\begin{theorem} Let $p,q$ be  $L$-hyperbolic points of  $f$.
\begin{itemize}
\item[(i)] if there is a $L$-homoclinic transverse point $x$ of $p$ then $f$ has chaotic behavior in all neighborhood of $\{p,x\}$. 
\item[(ii)] if $p$ and $q$ are are $L$-homoclinically related then  $f$ has chaotic behavior in a neighborhood of $\{p,q\}$.
\end{itemize}
\end{theorem}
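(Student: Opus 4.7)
The plan is to transplant the classical Birkhoff--Smale horseshoe construction into the Lipschitz setting, using the Lipschitz $\lambda$-Lemma (Theorem \ref{lambda_lemma}), the Hartman--Grobman conjugation and cone characterizations from Theorem \ref{stable_manifold_theorem}(iii)--(iv), and openness of $L$-transversality (Theorem \ref{Openness}) in place of their $C^1$ counterparts. In both (i) and (ii) the goal is to construct, inside a prescribed neighborhood of $\{p,x\}$ (resp.\ $\{p,q\}$), a disjoint pair of ``vertical'' strips $V_0,V_1$ and an integer $N$ so that $f^N(V_0)\cup f^N(V_1)$ meets $V_0\cup V_1$ in the horseshoe pattern; the itinerary map on the maximal invariant set $\Lambda$ then realizes the required conjugacy with $\sigma_2$.

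\textbf{Part (i).} I would pass to self-lifting coordinates in which Theorem \ref{stable_manifold_theorem} provides local Lipschitz graphs $\theta,\sigma$ describing $W^s_r(p,f)$ and $W^u_r(p,f)$, together with the invariant $1$-cones $C_1(E^s),C_1(E^u)$. Let $R$ be the topological box obtained from the local product of these two graphs; its ``horizontal'' (unstable) and ``vertical'' (stable) boundaries are Lipschitz graphs with constant $<1$. Since $x$ is $L$-transverse homoclinic, there exist $k,\ell\geq 0$ with $f^{-k}(x)$ on an unstable plaque of $R$ and $f^{\ell}(x)$ on a stable plaque. Pick a small unstable-type disk $D$ through $f^{-k}(x)$, which is $L$-transverse to $W^s(p,f)$ by assumption. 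Theorem \ref{lambda_lemma} then gives, for any prescribed $\varepsilon>0$ and all large $n$, that $f^n(D)$ is $\varepsilon$-close to $W^u(p,f)$ in the Lipschitz norm. Fix $N$ sufficiently larger than $k+\ell$; take $V_0\subset R$ to be a small vertical strip containing $p$ (well-defined by Theorem \ref{stable_manifold_theorem}(v)) and $V_1\subset R$ a small vertical strip containing $f^{-k}(x)$, disjoint from $V_0$. Their images $H_j=f^N(V_j)$ are then disjoint horizontal strips crossing $R$ all the way through in the unstable direction, with stable width $<\varepsilon$. Each $V_i\cap H_j$ is the intersection of two Lipschitz graphs over complementary subspaces with constants $<1$, so by Theorem \ref{Openness} it is a nonempty subbox, unique thanks to the contraction built into the proof of that theorem. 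On $\Lambda:=\bigcap_{k\in\mathbb{Z}}f^{-kN}(V_0\cup V_1)$ define the itinerary $h\colon\Lambda\to\Sigma_2$ by $h(y)_k=i$ when $f^{kN}(y)\in V_i$; invariance of the $1$-cones and the rate $\tau(A)+\tn{Lip}(\varphi)<1$ from Theorem \ref{stable_manifold_theorem}(iv) make $h$ a homeomorphism conjugating $f^N|_\Lambda$ with $\sigma_2$.

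\textbf{Part (ii).} The same construction with two boxes $R_p,R_q$ around $p,q$ works verbatim: the two $L$-transverse heteroclinic points allow Theorem \ref{lambda_lemma} (and its backward version in the Remark after it) to be applied in both directions, yielding two disjoint return strips between $R_p$ and $R_q$ that again produce a shift on two symbols.

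\textbf{Main obstacle.} The delicate point is uniqueness of the intersections $V_i\cap H_j$ and strict invariance of the cone fields without pointwise differentiability of $f$. Both reduce to the arithmetic bound $\tn{Lip}(\varphi)<(1-\tau(A))/2$ of \eqref{Lip_estimates}, which drives the contraction in Theorem \ref{Openness} and the cone calculations of Theorem \ref{stable_manifold_theorem}(iv). A secondary subtlety is controlling the Lipschitz closeness of the horseshoe strips under the $N$ iterations needed to produce them; the uniform bounds from Theorem \ref{convergence_of_manifolds} together with Theorem \ref{Witney_extension} (the $C^1$ approximation implicitly used by the $\lambda$-Lemma) are exactly what makes this work, once $\varepsilon$ is chosen small enough relative to $N$.
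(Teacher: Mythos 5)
Your proposal is correct and, for part (i), follows essentially the same route as the paper: both build a Smale horseshoe near the homoclinic orbit by iterating a box whose unstable-type sides are $L$-transverse to $W^s(p,f)$ and invoking Theorem \ref{lambda_lemma} to get the crossing strips, then quoting the standard conjugacy of the horseshoe with $\sigma_2$ (the paper gives only this geometric outline, citing Smale, while you flesh out the strips $V_0,V_1$, the cone field from Theorem \ref{stable_manifold_theorem}(iv), and the uniqueness of $V_i\cap H_j$ via the contraction in Theorem \ref{Openness}). For part (ii) you take a genuinely different, though equally standard, route: you run the two-box heteroclinic construction directly between $R_p$ and $R_q$, whereas the paper first applies Theorem \ref{lambda_lemma} to the heteroclinic cycle to produce a single $L$-transverse homoclinic point of $p$ and then reduces (ii) to (i). The paper's reduction is shorter; your direct construction has the advantage of exhibiting the invariant set explicitly inside a neighborhood of $\{p,q\}$, which is what the statement of (ii) actually asserts, rather than only near $\{p,x\}$ for the manufactured homoclinic point $x$. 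Either way the essential input is the same: the Lipschitz $\lambda$-Lemma plus openness of $L$-transversality substituting for the $C^1$ tools in the classical Birkhoff--Smale argument.
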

\begin{proof}
Since we have a Lipschitz version of the $\lambda$-Lemma the proof is the same as doing in \cite{SMALEg,Smale_diff}. Thus we will make a geometric outline of the proof. In fact for (i) we just notice that there is a Smale's horseshoe in neighborhood of $\{p,x\}$ since if we take a square  whose sides are transverse to $W^s(p,f)$ then the Theorem \ref{lambda_lemma} states that the  evolution theses sides by $f$ will be transform in a picture similar a horseshoe. It is well known that a horseshoe is topologically conjugated to the shift, for details see \cite[Chapter 3]{Wen}. For (ii) we  apply the Theorem \ref{lambda_lemma}  in $W^u(p,f)$ in order to obtain a $L$-homoclinic point $x\in W^u(p)\tpitchfork_{L,x}  W^s(p)$ and the result follows from (i). 
\end{proof}

Let $p$ and $q$ be $L$-hyperbolic fixed points of  a lipeomorphism $f$. We say that $p$ and $q$ are {\it $L$-heteroclinically related} if there is a point $x$ such that  $W^u(p,f)\tpitchfork_{L,x} W^s(q,f)$.

\begin{theorem}
The relation  $L$-heteroclinically related is transitive, that is, If $p,q,r$ are $L$-hyperbolic fixed points of a lipeomorphism $f$ such that $p$ and $q$ are  $L$-heteroclinically related, $q$ and $r$ are  $L$-heteroclinically related then $p$ and $r$ are  $L$-heteroclinically related.
\end{theorem}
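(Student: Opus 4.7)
The plan is to push a small disk $D\subset W^u(p,f)$ through the $L$-transverse intersection with $W^s(q,f)$ forward under $f$, apply the Lipschitz $\lambda$-Lemma (Theorem \ref{lambda_lemma}) so that $f^n(D)$ becomes Lip-close to $W^u(q,f)$, and then invoke openness of $L$-transversality (Theorem \ref{Openness}) at the other heteroclinic point to produce an $L$-transverse intersection of $W^u(p,f)$ with $W^s(r,f)$. By hypothesis there exist $x\in W^u(p,f)\cap W^s(q,f)$ with $W^u(p,f)\tpitchfork_{L,x} W^s(q,f)$ and $y\in W^u(q,f)\cap W^s(r,f)$ with $W^u(q,f)\tpitchfork_{L,y} W^s(r,f)$. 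The $L$-transversality at $x$ presents $W^u(p,f)$ and $W^s(q,f)$ locally as complementary graphs over a splitting $E=E_1\oplus E_2$, so $\dim W^u(p,f)=\dim E_1$ and $\dim W^s(q,f)=\dim E_2$; combined with $\dim W^u(q,f)+\dim W^s(q,f)=\dim E$, this forces $\dim W^u(p,f)=\dim W^u(q,f)$.

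Choose a small disk $D\subset W^u(p,f)$ through $x$ of this common dimension; taking $D$ small enough, $D\tpitchfork_{L,x} W^s(q,f)$, and Theorem \ref{lambda_lemma} applied at $q$ yields, for every $\varepsilon>0$, an $n_0$ such that $f^n(D)$ is $\varepsilon$-close to $W^u(q,f)$ in the Lip norm for $n\geq n_0$. Since $f$ is a lipeomorphism and $W^u(p,f)$ is forward $f$-invariant, $f^n(D)\subset W^u(p,f)$ for all $n\geq 0$. At $y$ the $L$-transversality of $W^u(q,f)$ and $W^s(r,f)$ produces a splitting $\tilde E_1\oplus \tilde E_2=E$, a radius $r_0>0$, and Lipschitz maps $\sigma\colon B_{r_0}^{\tilde E_1}(0)\to \tilde E_2$, $\theta\colon B_{r_0}^{\tilde E_2}(0)\to \tilde E_1$ fixing $0$, each with Lipschitz constant at most some $c<1$, whose translates to $y$ describe $W^u(q,f)$ and $W^s(r,f)$ locally. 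Since $f^n(D)$ is Lip-close to $W^u(q,f)$ near $y$ for $n$ large, $f^n(D)$ can be written locally as the graph of a Lipschitz map $\tilde\sigma\colon B_{r_0}^{\tilde E_1}(0)\to\tilde E_2$ satisfying $\tn{Lip}(\tilde\sigma)<1$ and $\|\sigma-\tilde\sigma\|_{C^0}\leq(1-c)r_0/2$. Applying Theorem \ref{Openness} with $\tilde\theta=\theta$ produces a point $z\in f^n(D)\cap W^s(r,f)$ with $f^n(D)\tpitchfork_{L,z} W^s(r,f)$; since $f^n(D)\subset W^u(p,f)$, this upgrades to $W^u(p,f)\tpitchfork_{L,z} W^s(r,f)$, proving that $p$ and $r$ are $L$-heteroclinically related.

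The delicate step is the passage from ``$f^n(D)$ is $\varepsilon$-close to $W^u(q,f)$ in the Lip norm'' to the precise graph form demanded by Theorem \ref{Openness}: one must re-base the graph representation at a point near $y$, shrink its domain to stay within the region of Lip control furnished by the $\lambda$-Lemma, and verify that a submanifold sufficiently Lip-close to a Lipschitz graph of slope less than $1$ is itself such a graph. Once this bookkeeping is set up, the openness theorem together with the forward invariance of $W^u(p,f)$ closes the argument.
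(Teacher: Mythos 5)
Your proof is correct and uses the same two ingredients as the paper (the Lipschitz $\lambda$-Lemma, Theorem \ref{lambda_lemma}, and openness of $L$-transversality, Theorem \ref{Openness}), but the geometry is arranged differently. You iterate a single disk $D\subset W^u(p,f)$ forward until $f^n(D)$ is Lip-close to $W^u(q,f)$ and then manufacture the transverse intersection with $W^s(r,f)$ \emph{near the second heteroclinic point $y$}, using the given transversality $W^u(q,f)\tpitchfork_{L,y}W^s(r,f)$ as the reference pair of graphs for Theorem \ref{Openness}. The paper instead runs a symmetric, two-sided argument: it iterates a disk $D_x\subset W^u(p,f)$ forward and a disk $D_y\subset W^s(r,f)$ backward (via the remark following Theorem \ref{lambda_lemma}) and produces the intersection point \emph{near the fixed point $q$ itself}, where $W^u(q,f)$ and $W^s(q,f)$ are $L$-transverse by the local manifold theorem. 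Your one-sided version avoids the backward $\lambda$-Lemma entirely, at the cost of needing the $\lambda$-Lemma's conclusion to hold near $y$, which may sit far out on the global unstable manifold of $q$ (one must push the local closeness out by finitely many further iterates of the lipeomorphism $f$); the paper's version only needs closeness near $q$ but uses invertibility twice. You correctly identify, and the paper silently elides, the real bookkeeping issue in either route: converting ``Lip-close to $W^u(q,f)$'' into the exact based-graph format with $C^0$-error at most $(1-c)r/2$ that Theorem \ref{Openness} demands. One small caveat: $L$-transversality at $x$ only forces $\dim W^u(p,f)+\dim W^s(q,f)\geq \dim E$, not equality of $\dim W^u(p,f)$ and $\dim W^u(q,f)$; this is harmless, since you may always choose $D$ to be a disk of dimension $\dim W^u(q,f)$ inside $W^u(p,f)$ that is still $L$-transverse to $W^s(q,f)$, exactly as your construction does.
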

\begin{proof}
Let $x\in W^u(p,f)\cap W^s(q,f)$ and $y\in W^u(q,f)\cap W^s(r,f)$ points in $M$ such that  $W^u(p,f)\tpitchfork_{L,x} W^s(q,f)$ and $W^u(q,f) \tpitchfork_{L,y} W^s(r,f)$.  We can choose two submanifolds $D_x$ and $D_y$  of $M$ such that $D_x\tpitchfork_{L,x}W^s (q,f)$ and $D_y \tpitchfork_{L,y}W^s (r,f)$ with $D_x\subset W^u(p,f)$ and $D_y\subset W^u(r,f)$ by Theorem \ref{lambda_lemma}  we find a intersection transverse point $z\in f^{n_x}(D_x)\cap  f^{-n_y}(D_y)$ and then $z\in W^u(q)\tpitchfork_{L,z}  W^s(r)$ for some positive integers $n_x$ and $n_y$. 
\end{proof}

\section{One dimensional dynamics}\label{One dimensional dynamics}
In this section we deal with one dimensional dynamics generated by a Lipschitz map. Since we have just one direction the dynamics have particular properties once that we can not have distinct direction of contraction and expansion. Thus the absence of saddle points makes hyperbolic fixed points  attracts or expands orbits in their neighborhood. In order to state our contributions we consider the following  preliminary examples. Let $f,g,h:\R\to \R$ maps given by
$f(x)=2x$ if $x<0$, $f(x)=x^2$ if $0.1\leq x\leq 1$ and  $f(x)= 0.5x+0.5$ if $x>1$, $g(x)=x^3-x$, for $x\in\R$ and $h(x)=0.2x$ if $<0$, $h(x)=x^2$ if $ 0.1\leq x\leq 1$ and $h(x)=2.1x-1.1$ if $x>1$.
%
We can see that $f, g$ and $h$ have two fixed points,  the map $g$ is smooth while $f$ and $h$ is not differentiable n a neighborhood of the fixed points. Thus the maps $f$ and $h$  does not fall into the general theory of smooth dynamical systems in order to study hyperbolicity and permanence of fixed points. Informally, if we move the graphs smoothly we see that the numbers of fixed points of $f$ can increase or disappear, so $f$ does not look well behaved under small perturbations. But this bad behavior is not due to lack of differentiability. In fact the map $h$ is not differentiable in the fixed points and even if we move its graphs smoothly we can see that the behavior of $h$ and $g$ are similar that is the fixed points are preserved.

\subsection{Permanence of Fixed Points}\label{sec2}
Our main goal in this subsection is to find a class of Lipschitz function whose the dynamics are preserved under small Lipschitz perturbations.   Our results states that not all class of Lipschitz maps will be stable under small perturbation in the Lip norm as we saw above the map $f$. We are concerned with maps like the above map $g$. In \cite{Giuliano} was introduced  the  concept of \emph{reverse Lipschitz map}. This concept was utilized in order to characterize sources for locally Lipschitz maps (with no differentiability). The map $f:{\mathbb R}\to{\mathbb R}$ is called {\it reverse Lipschitz} if there exists a constant $c \in {\mathbb R}$, $c > 0$ (reverse Lipschitz constant of $f$) such that, $\forall \ x, y \in {\mathbb R} 
\Longrightarrow | f(x) - f(y) | \geq c | x - y |$. Analogously, $f$ is said to be {\it locally reverse Lipschitz} if for all  $x\in\R$  there exists an $\varepsilon$-neighborhood
$N_{\epsilon} (x)$ of $x$ such that $f$ restricted to $N_{\epsilon}(x)$ is reverse Lipschitz. Notice that as local Lipschitz constant the reverse constant depend on the neighborhood of the considered point, that is, in general this constants may change from increasing or decreasing $\varepsilon$, thus we always consider the smallest constant in this neighborhood.  

We now define sink and source for Lipschitz maps without requiring differentiability.  Assume that $f:{\mathbb R}\to{\mathbb R}$ is a map 
and let $p$ be a fixed point of $f$. The point $p$ is a {\it sink} (or {\it attracting fixed point}) if there exists an $\epsilon > 0$
such that, for all $x \in N_{\epsilon}(p)$, $\displaystyle{\lim_{k\rightarrow\infty}}f^{k}(x) = p$. If all points sufficiently close to $p$ are repelled from $p$,
then $p$ is said to be a {\it source}. More precisely, $p$ is a source if there exists an epsilon neighborhood $N_{\epsilon} (p)$ such
that, for every $\ x \in N_{\epsilon} (p)$, $x \neq p$, there is a positive integer $k$ with $|f^{k}(x) - p| \geq \epsilon$. In \cite{Giuliano}, the authors  characterized sink and source for locally Lipschitz and reverse Lipschitz maps, respectively, by means  of the Lipschitz constant and reverse Lipschitz constant \cite[Theorem.3.2]{Giuliano}. Let $f:{\mathbb R}\to{\mathbb R}$ be a map and $p \in{\mathbb R}$ a fixed point of $f$. The following statements are true.
\begin{itemize}
\item [(i)] If $f$ is strictly locally Lipschitz map at $p$, with Lipschitz constant $c < 1$, then $p$ is a sink.
\item [(ii)] If $f$ is locally reverse Lipschitz map at $p$, with constant $r > 1$, then $p$ is a source.
\end{itemize}
The next results will improve this  by showing that sink and source are isolated fixed points which  are stable by small Lipschitz perturbation.

\begin{theorem}\label{main1}
Let $f:{\mathbb R}\to{\mathbb R}$ be a map and $p$ a fixed point of $f$. The following statements are true.
\begin{itemize}
\item [(i)] If $f$ is locally strictly Lipschitz, with constant $c < 1$ in a neighborhood of $p$, then $p$ is the unique fixed point in this neighborhood and it is a sink.
\item [(ii)] If $f$ is reverse Lipschitz with constant $r > 1$, in a neighborhood of $p$, then $p$ is the unique fixed point in this neighborhood and it is a source.
\end{itemize}
\end{theorem}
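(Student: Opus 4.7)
The plan is to prove both parts by confronting the Lipschitz and reverse Lipschitz inequalities with the fixed point equation $f(p)=p$. For part (i), I would first argue uniqueness: if $q\neq p$ is a second fixed point in the neighborhood $N_\epsilon(p)$ on which $f$ is Lipschitz with constant $c<1$, then
\[
|q-p|=|f(q)-f(p)|\leq c\,|q-p|<|q-p|,
\]
a contradiction. The sink property is then an iterative estimate. For any $x\in N_\epsilon(p)$ one has $|f(x)-p|=|f(x)-f(p)|\leq c|x-p|<|x-p|<\epsilon$, so $f(x)\in N_\epsilon(p)$, and induction gives $|f^k(x)-p|\leq c^k|x-p|\to 0$.

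For part (ii), uniqueness is symmetric: any other fixed point $q$ in the neighborhood where $f$ is reverse Lipschitz with constant $r>1$ would satisfy $|q-p|=|f(q)-f(p)|\geq r|q-p|>|q-p|$, contradiction. The source property is the one place where a little care is needed, because the reverse Lipschitz inequality is only available so long as both arguments lie in $N_\epsilon(p)$. I would fix $x\in N_\epsilon(p)$, $x\neq p$, and argue inductively: as long as $f^0(x),f^1(x),\ldots,f^{k-1}(x)$ all remain in $N_\epsilon(p)$, applying the reverse Lipschitz inequality to the pair $(f^j(x),p)$ gives $|f^{j+1}(x)-p|\geq r\,|f^j(x)-p|$, so $|f^k(x)-p|\geq r^k|x-p|$. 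Since $r>1$, the right-hand side eventually exceeds $\epsilon$, forcing some first index $k$ at which $f^k(x)\notin N_\epsilon(p)$, i.e.\ $|f^k(x)-p|\geq \epsilon$, which is exactly the definition of a source.

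Both arguments are essentially routine consequences of the respective one-sided inequalities combined with the equation $f(p)=p$. The only mildly subtle step is the iterative one in (ii): one must observe that the reverse Lipschitz bound applies at step $j$ only because $f^j(x)$ still lies in $N_\epsilon(p)$, and then use the geometric growth of the distances to guarantee that this condition must fail in finitely many steps. No appeal to differentiability or to the earlier hyperbolicity machinery is needed; the statement is purely a one-dimensional consequence of the Lipschitz-type estimate near the fixed point.
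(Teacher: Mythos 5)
Your proposal is correct. It differs from the paper's proof mainly in how uniqueness is obtained and in being fully self-contained. The paper imports the sink/source characterizations from its cited earlier work and then derives uniqueness \emph{dynamically}: in (i) it argues that a second fixed point $q$ would itself be a sink, so points in a small neighborhood of $q$ would converge simultaneously to $q$ and to $p$, a contradiction; in (ii) it notes that any $q\neq p$ in the neighborhood must eventually escape it, hence cannot be fixed. You instead get uniqueness in one line by applying the Lipschitz (resp.\ reverse Lipschitz) inequality directly to the pair $(q,p)$: $|q-p|\leq c|q-p|$ and $|q-p|\geq r|q-p|$, each contradictory for $|q-p|>0$. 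This is cleaner and avoids the paper's slightly awkward detour through the attraction basins. You also re-prove the sink and source properties by iteration rather than citing them, and you correctly flag the only delicate point in (ii): the reverse Lipschitz estimate at step $j$ is only available while $f^j(x)$ remains in the neighborhood, so the geometric growth $|f^k(x)-p|\geq r^k|x-p|$ must be read as forcing a first exit time rather than holding for all $k$. Both arguments are valid; yours is the more economical and self-contained of the two.
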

\begin{proof}
To prove (i) notice that follows  that $p$ is a sink, and then there is a neighborhood $N_\delta(p)$ such that $f(\overline{N_\delta(p)})\subset\overline{ N_\delta(p)}$. Now the result follows  from  Banach's Contraction Theorem.  In fact, let $q\in N_\delta(p)$ be a fixed point of $f$, with $q\neq p$. Take $\varepsilon=\frac{\delta-|p-q|}{2}$, then $N_\varepsilon(q)\subset N_\delta(p)$ and $q$ is a sink, thus, for $x\in N_\varepsilon(q)$, we have $\ds\lim_{k\to\infty}f^k(x)=q$ and $\ds\lim_{k\to\infty}f^k(x)=p$  which is a contradiction.  

To show  Item (ii) we have that $p$ is a source. Let $q\in N_\delta(p)$, $q\neq p $. Then there will be a positive integer $k_0$ such that $f^{k_0}(q)\notin N_\delta(p)$,  which implies $f(q)\neq q$. Therefore there is no  fixed point of $f$ different from $p$. 
\end{proof}

It is clear that if a map $f$ is such that Lip$(f)=1$ in a neighborhood of a fixed point $p$, then $f$ may exhibit different behaviors as we can see with the map $f(x)=x\cos(\ln(x))$, $x\neq 0$ and $f(0)=0$. We have $0$ a fixed point that has  attracting  and repelling direction.  This type of fixed point is called
{\it indiferent points}. Hence  sink and source are one dimensional version of the  $L$-hyperbolic points

\begin{lemma}\label{glips_too}
Let $f:\R\to\R$ be a map and let $p\in\R$. If $g:\R\to\R$ is a map such that $\|f-g \|_{N_{\delta} (p)}<\varepsilon$ (that is \eqref{Lip_Norm} is well defined in $N_{\delta} (p)$ and smaller than $\varepsilon$), for some $\delta>0$, then for $\varepsilon$ sufficiently small, we have:
\begin{itemize}
\item[(i)] If $f$ is strictly locally Lipschitz with locally Lipschitz constant  $c_{f,p}<1$ in  $N_\delta(p)$, then $g$ is locally Lipschitz and the locally Lipschitz constant of $g$ is strictly  less than one.
\item[(ii)] If $f$ is locally reverse Lipschitz with locally Lipschitz constant  $r_{f,p}>1$ in  $N_\delta(p)$, then $g$ is reverse locally Lipschitz and the  locally reverse Lipschitz constant of $g$ is strictly greater than one.
\end{itemize}
\end{lemma}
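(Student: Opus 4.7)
The plan is to exploit the definition of the local Lipschitz norm \eqref{Lip_Norm}, which controls both the $C^0$-distance and the Lipschitz seminorm of $f-g$ on $N_\delta(p)$. Since $\|f-g\|_{N_\delta(p)}<\varepsilon$, the critical piece of information is that $\tn{Lip}(f-g,N_\delta(p))<\varepsilon$; this turns the proof into a one-line triangle-inequality estimate in each direction. I would not need to invoke Proposition \ref{MVT} or Rademacher-type facts, since nothing here requires $g$ to be differentiable.

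For item (i), pick $\varepsilon>0$ with $\varepsilon<1-c_{f,p}$. For any $x,y\in N_\delta(p)$ I would write
\begin{equation*}
|g(x)-g(y)|\leq |f(x)-f(y)|+|(g-f)(x)-(g-f)(y)|\leq c_{f,p}|x-y|+\varepsilon|x-y|,
\end{equation*}
so $\tn{Lip}(g,N_\delta(p))\leq c_{f,p}+\varepsilon<1$, which is exactly the claim.

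For item (ii), pick $\varepsilon>0$ with $\varepsilon<r_{f,p}-1$. The reverse Lipschitz hypothesis supplies a lower bound on $|f(x)-f(y)|$, so a downward triangle inequality yields, for $x,y\in N_\delta(p)$,
\begin{equation*}
|g(x)-g(y)|\geq |f(x)-f(y)|-|(f-g)(x)-(f-g)(y)|\geq r_{f,p}|x-y|-\varepsilon|x-y|,
\end{equation*}
and hence $g$ is reverse Lipschitz on $N_\delta(p)$ with constant at least $r_{f,p}-\varepsilon>1$.

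There is essentially no obstacle here; the only point that deserves care is the bookkeeping between a constant and \emph{the} (infimal) constant defined in \eqref{lip_seminorm}. I would state explicitly that because $c_{f,p}+\varepsilon$ (respectively $r_{f,p}-\varepsilon$) is a Lipschitz (respectively reverse Lipschitz) constant for $g$ on $N_\delta(p)$, the least such constant inherits the strict inequality by property (ii) of the Lipschitz seminorm. The threshold for $\varepsilon$ is thus $\varepsilon<\min\{1-c_{f,p},\,r_{f,p}-1\}$, which is harmless because each part of the lemma only requires one of these bounds.
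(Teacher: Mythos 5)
Your proof is correct and follows essentially the same route as the paper's: item (i) is the upward triangle inequality giving $\tn{Lip}(g,N_\delta(p))\leq c_{f,p}+\varepsilon$, and item (ii) is the downward triangle inequality (the paper phrases it via difference quotients, but it is the same estimate) giving a reverse constant of at least $r_{f,p}-\varepsilon$. Your extra remark about passing from \emph{a} Lipschitz constant to the infimal one is a harmless refinement the paper leaves implicit.
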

\begin{proof}
If $x,y\in N_\delta(p)$, $x\neq y$, 
\begin{align*}
|g(x)-g(y)|&\leq |g(x)-g(y)-f(x)+f(y)|+|f(x)-f(y)|\\
&\leq \varepsilon|x-y|+c_{f,p}|x-y|\leq (\varepsilon+c_{f,p})|x-y|.
\end{align*}
Since $c_{f,p}<1$, Item (1) follows if we take $\varepsilon<1-c_{f,p}$.

To prove Item (2), note that
$$
\varepsilon\geq \left|  \frac{f(x)-f(y)}{|x-y|}-\frac{g(x)-g(y)}{|x-y|} \right|\geq \left| \frac{f(x)-f(y)}{x-y}\right|-\left| \frac{g(x)-g(y)}{x-y} \right|, 
$$
which implies 
$$
\left| \frac{g(x)-g(y)}{x-y} \right|\geq r_{fp}-\varepsilon
$$
Since $r_{f,p}>1$, Item (2) follows if we take $\varepsilon$ sufficiently small.
\end{proof}

If we assume that $p$ is a $L$-hyperbolic fixed point of $f$, for example a sink, then  we conclude that $g$ is locally Lipschitz with locally Lipschitz constant strictly  less than one in $N_\delta(p)$.  But  this estimate can not be a priori transferred to all $\R$ and we also can not ensure that $g(N_\delta(p))\subset N_\delta(p)$.  Therefore we can not state yet that $g$ has a fixed point (sink) in $N_\delta(p)$.  As we will see in the next results we need to assume more smooth conditions in $f$ or impose restrictions in the Lipschitz constant of $f$. 

\begin{theorem}\label{main2}
Let $f:{\mathbb R}\to{\mathbb R}$ be a map and $p$ a fixed point of $f$ such that $f$ is sufficiently differentiable in $\R$ and  $|f'(p)|\neq 1$. If $g$ is a locally Lipschitz function such that $\|f-g \|_{N_{\delta} (p)}<\varepsilon$, then for $\delta$ and $\varepsilon$ sufficiently small there is a unique fixed point $q$ of $g$ in $N_\delta(p)$. Moreover, if $|f'(p)|<1$  then $q$ is a sink and if $|f'(p)|>1$ then $q$ is a source.
\end{theorem}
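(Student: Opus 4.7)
The plan is to separate the two cases according to whether $|f'(p)|<1$ or $|f'(p)|>1$, using the continuity of $f'$ together with Proposition \ref{MVT} and Lemma \ref{glips_too} to transfer the relevant Lipschitz/reverse Lipschitz estimates from $f$ to $g$, and then produce the fixed point of $g$ via the Banach Contraction Principle. Throughout, I would first shrink $\delta$ so that $f'$ is essentially constant in sign and magnitude on $\overline{N_\delta(p)}$, and then shrink $\varepsilon$ accordingly.

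\emph{Case $|f'(p)|<1$.} Since $f'$ is continuous, I pick $\delta>0$ and $c<1$ with $|f'(x)|\le c$ on $\overline{N_\delta(p)}$. By Proposition \ref{MVT}, $\tn{Lip}(f,N_\delta(p))\le c$. Lemma \ref{glips_too}(i) then gives $\tn{Lip}(g,N_\delta(p))\le c+\varepsilon<1$ for $\varepsilon<1-c$. Next I check that $g$ maps $\overline{N_\delta(p)}$ into itself: for $x\in\overline{N_\delta(p)}$,
\[
|g(x)-p|\le |g(x)-f(x)|+|f(x)-f(p)|\le \varepsilon+c\delta,
\]
which is $\le\delta$ provided $\varepsilon\le (1-c)\delta$. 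Banach's Contraction Theorem then yields a unique fixed point $q\in\overline{N_\delta(p)}$, and by Theorem \ref{main1}(i) the point $q$ is a sink of $g$.

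\emph{Case $|f'(p)|>1$.} Now I pick $\delta>0$ and $r>1$ with $|f'(x)|\ge r$ on $\overline{N_\delta(p)}$; by shrinking $\delta$ I may also assume $f'$ has constant sign there, so that $f$ is a diffeomorphism onto its image. By the Mean Value Theorem, $f$ is reverse Lipschitz with constant $\ge r$ on $N_\delta(p)$, and by Lemma \ref{glips_too}(ii), $g$ is reverse Lipschitz with some constant $r'>1$ on $N_\delta(p)$, once $\varepsilon<r-1$. Uniqueness of a fixed point in $N_\delta(p)$ is immediate from Theorem \ref{main1}(ii). For existence, I would pass to the inverse: $f$ is a diffeomorphism of $\overline{N_\delta(p)}$ onto an interval containing $N_{r\delta}(p)\supset\overline{N_\delta(p)}$, and since $|g(p\pm\delta)-f(p\pm\delta)|\le\varepsilon$ with $|f(p\pm\delta)-p|\ge r\delta$, the image $g(\overline{N_\delta(p)})$ is still an interval containing $\overline{N_\delta(p)}$ for $\varepsilon$ small enough. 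Thus $g^{-1}\colon\overline{N_\delta(p)}\to\overline{N_\delta(p)}$ is well defined, and the reverse Lipschitz estimate for $g$ gives $\tn{Lip}(g^{-1})\le 1/r'<1$. Banach's theorem applied to $g^{-1}$ produces a unique fixed point $q$ of $g^{-1}$ in $\overline{N_\delta(p)}$, which is also the unique fixed point of $g$. Theorem \ref{main1}(ii) then ensures that $q$ is a source.

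The step I expect to be the main obstacle is the existence half of the source case, since the reverse Lipschitz property of $g$ alone does not produce a fixed point; the argument requires first using the monotonicity and image-control provided by $f$ being a local diffeomorphism in order to guarantee that $g^{-1}$ is defined on an appropriate closed ball around $p$. Once this is in place, everything else reduces to the previous theorem and the contraction principle.
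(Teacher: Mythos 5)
Your proof is correct, but it takes a genuinely different route from the paper's. The paper gives a single unified argument: writing $L=f'(p)$ and $h(x)=g(x+p)-p$, it observes that $g(x)=x$ is equivalent to $\psi(z)=z$ for $\psi(z)=(1-L)^{-1}[h(z)-Lz]$, $z=x-p$, and shows that $\psi$ is a $\tfrac12$-contraction of $\overline{N_\delta(p)}$ into itself --- the point being that $h(z)-Lz$ has small Lipschitz constant near $0$ because $g$ is Lip-close to $f$ and $f$ is differentiable with derivative close to $L$ on a small neighborhood. This works for any $L\neq 1$, hence for both $|L|<1$ and $|L|>1$ simultaneously, without ever inverting $g$, and it is the template the paper reuses verbatim for Theorem \ref{main3}, where differentiability of $f$ is replaced by condition \eqref{gordura}. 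Your case split buys a more elementary argument in the contracting case (Banach applied directly to $g$) at the price of extra work in the expanding case, where you correctly identify the real obstacle: reverse Lipschitz gives uniqueness (Theorem \ref{main1}(ii)) but not existence, so you must pass to $g^{-1}$. Your surjectivity argument is sound --- continuity plus injectivity of $g$ on the interval forces strict monotonicity, so $g(\overline{N_\delta(p)})$ is the interval with endpoints $g(p\pm\delta)$, and the estimates $|g(p\pm\delta)-p|\geq r\delta-\varepsilon$ on opposite sides of $p$ show this interval contains $\overline{N_\delta(p)}$ once $\varepsilon\leq(r-1)\delta$, after which $\tn{Lip}(g^{-1})\leq 1/r'<1$ closes the argument. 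Both proofs then classify $q$ as a sink or source the same way, via Lemma \ref{glips_too} and Theorem \ref{main1}. The one thing your route loses is that it does not transfer to the non-differentiable setting of Theorem \ref{main3} as directly as the paper's $\psi$-trick does, since there the expanding case has no smooth local diffeomorphism to lean on.
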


\begin{remark}
Note that we do not require any differentiability in the map $g$. Therefore Theorem \ref{main2} extends the permanence of equilibrium points known  when $f$ and $g$ are both continuously differentiable and the perturbation is with respect to the  $C^1$-norm.
\end{remark}
\begin{proof}
We start denoting $L=f'(p)$ and defining the auxiliary function $h(x)= g(x+p)-p$. Note that  $h(x-p)=g(x)-p$ and then $g(x)=x$ if only if $h(x-p)=x-p$. 
\begin{align*}
h(x-p)=x-p & \Leftrightarrow h(x-p)-L(x-p)=x-p-L(x-p)\\
&\Leftrightarrow (1-L)^{-1}[h(x-p)-L(x-p)]=x-p.
\end{align*}
If we denote $z=x-p$ and $\psi(z)=(1-L)^{-1}[h(z)-Lz]$, we have that $x$ is a fixed point for $g$ if only if $z$ is a fixed point of $\psi$. In what follows we prove that, for  $\varepsilon$ and $\delta$ sufficiently small,  $\psi$ is a strict contraction in  $N_\delta(p)$.  In fact, for $|x-p|\leq\delta$, we have
\begin{align*}
|\psi(x-p)|&\leq |(1-L)^{-1}[g(x)-p-L(x-p) ] |\\
& \leq|(1-L)^{-1}[g(x)-g(p)-f(x)+f(p)]|+|(1-L)^{-1}[g(p)-f(p)]|\\
&+|(1-L)^{-1}[ f(x)-f(p)-L(x-p)]|.
\end{align*}
By the Lipschitz closeness, we have
\begin{multline*}
|(1-L)^{-1}[g(x)-g(p)-f(x)+f(p)]|+|(1-L)^{-1}[g(p)-f(p)]|\\ \leq |1-L|^{-1}\varepsilon|x-p|+|1-L|^{-1}\varepsilon \leq |1-L|^{-1}\varepsilon\delta+|1-L|^{-1}\varepsilon.
\end{multline*}
Before that, since $f$ is differentiable we can take the remainder in the definition of differentiability such that
$$
|(1-L)^{-1}[ f(x)-f(p)-L(x-p)]|\leq |1-L|^{-1}\varepsilon|x-p|\leq |1-L|^{-1}\varepsilon\delta.
$$
Thus if we take $\varepsilon\leq\min\{|1-L|\delta/3,|1-L|/3 \}$ we have, $|\psi(z)|\leq \delta$.

Now we prove that $\psi$ is a contraction.
\begin{align*}
|\psi(z)-&\psi(\bar{z})|\leq |(1-L)^{-1}[g(x)-p-L(x-p)-g(\bar{x})+p+L(\bar{x}-p) ]| \\
&= |(1-L)^{-1}[g(x)-g(\bar{x})-L(x-\bar{x})| \\
&\leq|(1-L)^{-1}[g(x)-g(\bar{x})-f(x)+f(\bar{x})]|+|(1-L)^{-1}[f(x)-f(\bar{x})-L(x-\bar{x})]|. 
\end{align*}
If we proceed as above, we obtain
$$
|(1-L)^{-1}[g(x)-g(\bar{x})-f(x)+f(\bar{x})]|\leq |1-L|^{-1}\varepsilon |x-\bar{x}|
$$
and since $f$ is sufficiently differentiable, for $\delta$ sufficiently small,
\begin{align*}
|(1-L)^{-1}[f(x)-f(\bar{x})-L(x-\bar{x})]|&\leq |(1-L)^{-1}[f(x)-f(\bar{x})-f'(\bar{x})(x-\bar{x})]|\\
&+|(1-L)^{-1}[(f'(\bar{x})-f'(p))(x-\bar{x})]|\\
& \leq |(1-L)|^{-1}\varepsilon|x-\bar{x}|+|(1-L)|^{-1}\varepsilon|x-\bar{x}|.
\end{align*}
Thus, if we take $\varepsilon< |1-L|/6$ we obtain
$$
|\psi(z)-\psi(\bar{z})|\leq 3|1-L|^{-1}\varepsilon |x-\bar{x}|< \tfrac{1}{2}|x-\bar{x}|=\tfrac{1}{2}|z-\bar{z}|,
$$
that is,
$
|\psi(z)-\psi(\bar{z})|< \tfrac{1}{2}|z-\bar{z}|.
$
Hence there is a unique $q\in \overline{N_\delta(p)}$ such that $g(q)=q$.

Finally, since $f$ is continuously differentiable, then $f$ is locally Lipschitz. If we assume that $|f'(p)|<1$ then we can take $\delta,\lambda>0$ such that $|f'(x)|<\lambda<1$ for all $x\in N_\delta(p)$, by Mean Value Theorem the locally Lipschitz constant of $f$ in $N_\delta(p)$ is $\sup_{x\in N_\delta(p)}|f'(x)|<1$, Now the result follows from  Lemma \ref{glips_too} and Theorem \ref{main1}. Analogously we obtain that if $|f'(p)|>1$ then for $\delta$ sufficiently small $q$ is a sink for $g$.
\end{proof}

Now we drop the differentiability of $f$ and  we exhibit a class of locally Lipschitz maps whose it is possible  ensure the permanence and stability of fixed points. This class involves maps like $h$ and excludes  maps like $f$ presented in introduction of this section.

\begin{theorem}\label{main3}
Let $f:{\mathbb R}\to{\mathbb R}$ be a map and $p$ a fixed point of $f$ such that $f$ is locally Lipschitz with constant $C\neq 1$ in the neigborhood $N_\delta(p)$, for some $\delta>0$. Assume that $f$ satisfies
\begin{equation}\label{gordura}
|(1-C)^{-1}[f(x)-f(y)-C(x-y)|\leq \tfrac{1}{3}|x-y|,\quad\tn{for all }x,y\in N_\delta(p).
\end{equation}
If $g$ is a locally Lipschitz map such that $\|f-g \|_{N_{\delta} (p)}<\varepsilon$, then for $\varepsilon$ sufficiently small,  there is a unique fixed point $q$ of $g$ in $N_\delta(p)$. Moreover if $C<1$ then $q$ is a sink for $g$ and if $f$ is reverse Lipschitz with constant $r>1$ then $q$ is a source for $g$. 
\end{theorem}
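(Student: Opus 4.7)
The plan is to mirror the argument of Theorem \ref{main2}, with the Lipschitz constant $C$ taking the role previously played by $f'(p)$. Hypothesis \eqref{gordura} is precisely the non-differentiable substitute for the Taylor remainder estimate used there: it asserts that the linear map $z\mapsto Cz$ approximates the increments of $f$ well enough, relative to the factor $|1-C|^{-1}$, so that a contraction argument on a rescaled auxiliary map works.

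First I would reduce to a problem about a fixed point at the origin and build the auxiliary map. Set $h(x)=g(x+p)-p$ and
\[
\psi(z)=(1-C)^{-1}\bigl[h(z)-Cz\bigr],\qquad z\in\overline{N_\delta(0)}.
\]
A direct computation shows that $x=z+p$ is a fixed point of $g$ if and only if $z$ is a fixed point of $\psi$, so it suffices to prove that $\psi$ has a unique fixed point in $\overline{N_\delta(0)}$.

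Next I would verify that $\psi$ is a strict contraction of $\overline{N_\delta(0)}$ into itself. For self-mapping, I split
\[
|\psi(z)|\leq |1-C|^{-1}\bigl|g(x)-g(p)-f(x)+f(p)\bigr|+|1-C|^{-1}|g(p)-f(p)|+|1-C|^{-1}\bigl|f(x)-f(p)-C(x-p)\bigr|,
\]
bounding the first two terms by $|1-C|^{-1}\varepsilon\delta$ and $|1-C|^{-1}\varepsilon$ using the Lipschitz closeness $\|f-g\|_{N_\delta(p)}<\varepsilon$, and the third by $\tfrac{\delta}{3}$ using \eqref{gordura}. Choosing $\varepsilon$ small enough (for instance $\varepsilon\leq |1-C|\delta/(3(1+\delta))$) makes the right-hand side $\leq \delta$. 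For the contraction estimate the $g(p)-f(p)$ constant cancels, leaving
\[
|\psi(z)-\psi(\bar z)|\leq |1-C|^{-1}\bigl|g(x)-g(\bar x)-f(x)+f(\bar x)\bigr|+|1-C|^{-1}\bigl|f(x)-f(\bar x)-C(x-\bar x)\bigr|\leq \bigl(|1-C|^{-1}\varepsilon+\tfrac 13\bigr)|z-\bar z|,
\]
which is a contraction factor $<1$ once, say, $\varepsilon<|1-C|/6$. Banach's fixed point theorem then furnishes the unique $q\in N_\delta(p)$ with $g(q)=q$.

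Finally I would obtain the sink/source conclusions by transferring the Lipschitz data from $f$ to $g$. If $C<1$, Lemma \ref{glips_too}(i) gives that $g$ is strictly locally Lipschitz on $N_\delta(p)$ with constant strictly less than one, so Theorem \ref{main1}(i) applied at $q$ shows $q$ is a sink for $g$. If instead $f$ is reverse Lipschitz with constant $r>1$, Lemma \ref{glips_too}(ii) yields the analogous reverse Lipschitz estimate for $g$ with constant strictly greater than one, and Theorem \ref{main1}(ii) delivers that $q$ is a source. The main obstacle is purely bookkeeping: choosing $\varepsilon$ so small that both the self-map estimate and the contraction estimate hold simultaneously and that the perturbed Lipschitz (resp.\ reverse Lipschitz) constant remains on the correct side of one; the rest is the same triangle-inequality chain as in Theorem \ref{main2}, with \eqref{gordura} doing precisely the work that differentiability did there.
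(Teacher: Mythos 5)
Your proposal is correct and follows essentially the same route as the paper: the same auxiliary map $\psi(z)=(1-C)^{-1}[h(z)-Cz]$, the same three-term triangle-inequality split with \eqref{gordura} replacing the differentiability remainder, and the same appeal to Lemma \ref{glips_too} and Theorem \ref{main1} for the sink/source conclusion. Your explicit choices of $\varepsilon$ are if anything slightly more careful than the paper's.
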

\begin{proof}
With the same argument of Theorem \ref{main2}, we obtain,
\begin{align*}
|\psi(x-p)| & \leq|(1-C)^{-1}[g(x)-g(p)-f(x)+f(p)]|+|(1-C)^{-1}[g(p)-f(p)]|\\
&+|(1-C)^{-1}[ f(x)-f(p)-C(x-p)]|.
\end{align*}
We take $\epsilon<\min\{|1-C|\delta/3,|1-C|/3 \}$ it follows from \eqref{gordura} that $\psi$ takes $\overline{N_\delta(p)}$ into itself. Again as in the Theorem \ref{main2}, we have for $z=x-p$ and $\bar{z}=\bar{x}-p$,
$$
|\psi(z)-\psi(\bar{z})| \leq|(1-C)^{-1}[g(x)-g(\bar{x})-f(x)+f(\bar{x})]|+|(1-C)^{-1}[f(x)-f(\bar{x})-L(x-\bar{x})]|. 
$$
If we take $\epsilon< |1-C|/2$ it follows from \eqref{gordura} that $\psi$ is a strict contraction in $\overline{N_\delta(p)}$, hence $g$ has a unique fixed point $q$ in $\overline{N_\delta(p)}$. The stability of $q$ it follows from Lemma \ref{glips_too} and Theorem \ref{main1}.
\end{proof}

Notice that $f$ is sufficiently differentiable then the condition \eqref{gordura} is always true in a neighborhood of a $p=f(p)$. Moreover if $f$ is locally Lipschitz  we can rewrite \eqref{gordura} in the form
$$
\left|\frac{f(x)-f(y)}{x-y}-C \right|\leq \frac{|1-C|}{3},\quad x\neq y.
$$ 
This means that for functions like $f$ and $h$ presented in the introduction of this section, the locally Lipschitz constants in $N_\delta(p)$ should be closed of $\frac{df^+(p)}{dx}$ and $\frac{df^-(p)}{dx}$. Now it is easy to see that $h$ and $g$ satisfy \eqref{gordura} whereas $f$ not satisfies this condition.

\subsection{Permanence of Periodic Points}\label{sec3}
In this section we are concerned with permanence of periodic points. The differentiable characterization of periodic points can be found in \cite{Chaos}. But if $g=f^{k}:{\mathbb R}\to{\mathbb R}$ be a map and $p\in {\mathbb R}$ a fixed point of $g$, the following statements are true see \cite{Giuliano}.
\begin{itemize}
\item [(i)] If $g$ is strictly locally Lipschitz map at $p$, with Lipschitz constant $c < 1$, then $p$ is a periodic sink.
\item [(ii)] If $g$ is locally reverse Lipschitz map at $p$, with constant $r > 1$, then $p$ is a periodic source.
\end{itemize}
Since a $k$-periodic point is a fixed point for the map $f^k$ we can apply the Theorems \ref{main2} and \ref{main3} to obtain the following results. 
\begin{theorem}\label{main2_periodic}
Let $f:{\mathbb R}\to{\mathbb R}$ be a map and $p$ a fixed point of $f^k$, $k>1$, such that $f$ is continuously differentiable in $\R$ and  $|(f^k)'(p)|\neq 1$. If $g$ is a locally Lipschitz function such that $\|f^k-g^k \|_{N_{\delta} (p)}<\varepsilon$, then for $\delta$ and $\varepsilon$ sufficiently small there is a unique fixed point $q$ of $g^k$ in $N_\delta(p)$, that will be a preperiodic point of $g$. Moreover, if $|(f^k)'(p)|<1$  then the orbit of $q$ y $g$ is a periodic sink and if $|(f^k)'(p)|>1$ then the orbit of $q$ by $g$ is a periodic source.
\end{theorem}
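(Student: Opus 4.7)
The plan is to reduce directly to Theorem \ref{main2} applied to the pair $(f^k, g^k)$ at the fixed point $p$ of $f^k$. First I would verify the hypotheses of that theorem for this new pair. Continuous differentiability of $f$ on $\R$ gives, by iterated chain rule, $f^k \in C^1(\R)$ with
$$
(f^k)'(p) \;=\; \prod_{j=0}^{k-1} f'(f^j(p)),
$$
so the condition $|(f^k)'(p)|\neq 1$ is precisely the non-degeneracy condition required at the fixed point $p$ of $f^k$. Since $g$ is locally Lipschitz, a finite composition of locally Lipschitz maps is again locally Lipschitz on a sufficiently small neighborhood of $p$ (one just shrinks the neighborhood so that the first $k$ iterates of $g$ are defined and controlled), so $g^k$ is locally Lipschitz in $N_\delta(p)$. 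The closeness condition $\|f^k - g^k\|_{N_\delta(p)}<\varepsilon$ is an assumption of the theorem, so the authors have absorbed the cost of composition into the hypothesis rather than deducing it from $\|f-g\|$ being small.

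With the hypotheses verified, Theorem \ref{main2} applied to $(f^k,g^k)$ yields, for $\delta$ and $\varepsilon$ sufficiently small, a unique fixed point $q$ of $g^k$ in $N_\delta(p)$. Because $g^k(q)=q$, the finite set $\{q, g(q), \ldots, g^{k-1}(q)\}$ is $g$-invariant with period dividing $k$; this is exactly what "preperiodic point of $g$" means in the statement.

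For the sink/source conclusion, Theorem \ref{main2} gives the additional information: if $|(f^k)'(p)|<1$ then $q$ is a sink for $g^k$, and if $|(f^k)'(p)|>1$ then $q$ is a source for $g^k$. The remaining task is to transfer this to the whole $g$-orbit. I would argue that continuity of $g$ maps a basin neighborhood $N_\rho(q)$, on which $(g^k)^n \to q$, to a neighborhood of each iterate $g^j(q)$ on which $(g^k)^n \to g^j(q)$; iterating $g^k$ from such a neighborhood converges to $g^j(q)$, which is precisely the statement that the $g$-orbit of $q$ is a periodic sink. The source case is symmetric: continuity propagates the repelling property of $g^k$ at $q$ to each iterate $g^j(q)$, so some forward $g^k$-iterate of any nearby point exits a fixed small neighborhood, making the orbit a periodic source for $g$.

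The only real obstacle is the bookkeeping in this last step, where one must carefully translate "sink/source of $g^k$ at $q$" into "periodic sink/source of the $g$-orbit of $q$"; everything else is a clean reduction to the already-established Theorem \ref{main2}. This is essentially the strategy announced in the paragraph preceding the theorem, where the authors indicate that Theorems \ref{main2} and \ref{main3} can be applied to $f^k$ directly.
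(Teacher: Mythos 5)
Your proposal is correct and follows exactly the route the paper takes: the paper gives no separate proof for Theorem \ref{main2_periodic}, merely noting beforehand that a $k$-periodic point is a fixed point of $f^k$ so that Theorem \ref{main2} applies to the pair $(f^k,g^k)$, which is precisely your reduction. Your extra care in verifying the hypotheses for $(f^k,g^k)$ and in transferring the sink/source property of $g^k$ at $q$ to the full $g$-orbit (which, given the paper's definition of periodic sink/source via the $k$-th iterate, is essentially definitional) only fills in detail the paper leaves implicit.
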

The Lipschitz version of the Theorem \ref{main2_periodic} without requiring differentiability in $f$ can be state as follows.
\begin{theorem}\label{main3_periodic}
Let $f:{\mathbb R}\to{\mathbb R}$ be a map and $p$ a fixed point of $f^k$, $k>1$, such that $f^k$ locally Lipschitz in $\R$ with locally Lipschitz constant $C\neq 1$ in the neigborhood $N_\delta(p)$, for some $\delta>0$. Assume that $f^k$ satisfies
\begin{equation}\label{gordura2}
|(1-C)^{-1}[f^k(x)-f^k(y)-C(x-y)|\leq \tfrac{1}{3}|x-y|,\quad\tn{for all }x,y\in N_\delta(p).
\end{equation}
\par If $g$ is a locally Lipschitz function such that $\|f^k-g^k \|_{N_{\delta} (p)}<\varepsilon$, then for $\varepsilon$ sufficiently small there is a unique fixed  point $q$ of $g^k$ in $N_\delta(p)$. Moreover, if $C<1$  then the orbit of $q$ by $g$  is a periodic sink and if $C>1$ then the orbit of $q$ by $g$ is a periodic source.
\end{theorem}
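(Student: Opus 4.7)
The plan is to reduce Theorem \ref{main3_periodic} to a direct application of Theorem \ref{main3} by taking $k$-th iterates as the basic object. First I would set $F := f^k$ and $G := g^k$. The hypotheses state precisely that $F$ is locally Lipschitz in $N_\delta(p)$ with locally Lipschitz constant $C\neq 1$, that $F(p)=p$, and that $F$ satisfies condition \eqref{gordura2}, which is exactly hypothesis \eqref{gordura} of Theorem \ref{main3} for $F$ at the fixed point $p$. The perturbation assumption $\|f^k-g^k\|_{N_\delta(p)} < \varepsilon$ coincides with $\|F-G\|_{N_\delta(p)} < \varepsilon$, and $G$ is locally Lipschitz as a composition of locally Lipschitz maps.

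A direct application of Theorem \ref{main3} then produces, for $\varepsilon$ sufficiently small, a unique fixed point $q$ of $G=g^k$ in $N_\delta(p)$. In particular $q$ is a periodic point of $g$ whose period divides $k$, and its $g$-orbit $\mathcal{O}(q)=\{q,g(q),\dots,g^{k-1}(q)\}$ is well-defined. The same application gives: if $C<1$ then $q$ is a sink for $g^k$, and if $C>1$ then $q$ is a source for $g^k$.

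It remains only to translate the sink/source property of $q$ under $g^k$ into the periodic sink/source property of $\mathcal{O}(q)$ under $g$. For the sink case, the definition gives a neighborhood $N_{\eta_0}(q)$ whose points converge to $q$ under iteration of $g^k$. By continuity of each $g^i$, for $i=1,\dots,k-1$, I can pick a neighborhood $N_{\eta_i}(g^i(q))$ such that $g^{k-i}(N_{\eta_i}(g^i(q)))\subset N_{\eta_0}(q)$. Then the union $U=\bigcup_{i=0}^{k-1}N_{\eta_i}(g^i(q))$ is a neighborhood of $\mathcal{O}(q)$ in which every forward $g$-orbit accumulates precisely on $\mathcal{O}(q)$, which is exactly the definition of a periodic sink. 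The source case is handled dually: the source property for $g^k$ produces a neighborhood of $q$ from which $g^k$-iterates eventually escape by at least $\eta_0$, and using continuity of $g,\dots,g^{k-1}$ one assembles a neighborhood of $\mathcal{O}(q)$ with the corresponding repelling property under $g$.

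I expect the only delicate step to be the bookkeeping of how the sink/source property of $g^k$ at the single point $q$ transfers to the whole periodic orbit $\mathcal{O}(q)$; it is conceptually routine but requires uniform control on all the $g^i$ ($i=1,\dots,k-1$) to shrink neighborhoods consistently. The analytic core---the contraction argument that produces $q$ and the Lipschitz control on $g^k$---is absorbed entirely into the invocation of Theorem \ref{main3}, so no new estimates beyond those already used for the fixed-point case are required.
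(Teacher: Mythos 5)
Your reduction is exactly the paper's route: the paper gives no separate proof of Theorem \ref{main3_periodic}, only the remark that a $k$-periodic point is a fixed point of $f^k$, so Theorem \ref{main3} applied to $F=f^k$, $G=g^k$ yields the fixed point $q$, and the periodic sink/source characterization quoted from \cite{Giuliano} at the start of the subsection handles the transfer to the $g$-orbit (which you spell out by hand). The only caveat, inherited from the statement itself rather than introduced by you, is that the source conclusion of Theorem \ref{main3} requires $f^k$ to be \emph{reverse} Lipschitz with constant $r>1$, which is strictly stronger than merely having Lipschitz constant $C>1$.
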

Now consider a small Lipschitz perturbation of the following Lipschitz Logistic map. Let $g(x)=3.3x(1-x)$ be a map,  we have (see \cite{Chaos}) that $g$ has $2$-periodic sink orbit $\{0.4794,08236\}$  (considering four decimal places accuracy). We now can break the differentiability of $g$ into $0.4794$ and make a small perturbation. That is, we define the locally Lipschitz maps  
\begin{equation*}
f(x)=
\begin{cases}
0.15x+0.75,\, x<0.4794,\\
3.3x(1-x),\, x\geq 0.4794, 
\end{cases}\tn{ and }\,\,
h_\varepsilon(x)=
\begin{cases}
0.15x+0.75+\varepsilon,\, x<0.4794,\\
3.3x(1-x)+\varepsilon,\,  x\geq 0.4794. 
\end{cases}
\end{equation*}
Note that $f$  is not differentiable in $0.4794$ but   $\{0.4794,08236\}$ is also a periodic orbit for$f$ and the product of the the locally Lipschitz constant in a neigborhood of $0.4794$ and $0.8236$  is less than one, hence  it is a periodic sink. Moreover $h_\varepsilon$ is a Lipschitz perturbation of $h$ and then, by Theorem \ref{main3_periodic} for $\varepsilon$ sufficiently small $h_\varepsilon$ also has a periodic orbit  with same stability of $h$. 

\section{Lyapunov Exponent}\label{Lyapunov Exponent}
We finish our work with a discussion about Lyapunov exponent. We only consider the case of maps defined over $\R$, since the procedure for maps on $\R^n$ is quite similar. We propose the folowing definition of Lyapunov number and Lyapunov exponent for Lipschitz maps, which are not necessarily differentiable. 

\begin{definition}\label{L1}
Let $f:{\mathbb R}\to{\mathbb R}$ be a locally Lipschitz map and $\delta>0$. Denote $C_{x_i,\delta}$ the locally Lipschitz constant of $f$ in $N_\delta(x_i)$, $i=1,2,...$. Then the $\delta$-Lyapunov number $L_\delta(x_1)$ of the orbit ${\mathcal O}_{x_1}= \{ x_1 , x_2, x_3 ,\ldots \}$ is defined as
\begin{equation}\label{Lyap1}
L_\delta(x_1)=\displaystyle{\lim_{n\rightarrow\infty}}(C_{x_1,\delta}\cdots
C_{x_n,\delta})^{1/n},
\end{equation}
if the limit exists.

The $\delta$-Lyapunov exponent $h_\delta(x_1)$ is defined as
\begin{equation}\label{Lyap2}
h_\delta(x_1)=
\displaystyle{\lim_{n\rightarrow\infty}}(1/n)[\ln C_{x_1,\delta}+\cdots
+\ln C_{x_n,\delta}],
\end{equation}
if the limit exists.
\end{definition}

Note that since $f$ is locally Lipschitz \eqref{Lyap1} and \eqref{Lyap2} are well defined for $\delta>0$. Moreover, if $f$ is continuously differentiable with nonzero derivative, then for all $x_1\in\R$, we have
$
\lim_{\delta\to 0} L_\delta(x_1)=L(x_1).
$
Let $f: \mathbb{R}\to \mathbb{R}$ be a map. We say that $f$ is {\it locally $\delta$-uniform Lipschitz} map (where $\delta > 0$ is a real number) if, for every point $x$ of the domain, there exists a $\delta$ neighborhood $N_{\delta}(x)$ of $x$ such that $f$ restricted to $N_{\delta}(x)$ is Lipschitz. In other words, for a fixed $\delta$, every point has a $\delta$ neighborhood such that $f$ restricted to it is Lipschitz. The following result is a variation of \cite[Theorem 3.4]{Chaos} based on Lipschitz maps.

\begin{theorem}\label{L4}
Let $f: {\mathbb R}\to {\mathbb R}$ be a $\delta$-uniform  locally strictly Lipschitz map for some $\delta>0$.  Assume that ${\mathcal O}_{x_1}=\{x_1,x_2,...\}$ is asymptotically periodic to the periodic orbit ${\mathcal O}_{y_1} = \{y_1, y_2, \ldots\}$, then there is $\bar{\delta}>0$ such that $h_{\bar{\delta}}(x_1)\leq h_\delta(y_1)$, if both Lyapunov exponent exist.
\end{theorem}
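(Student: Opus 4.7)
The plan is to use the asymptotic periodicity of $\mathcal{O}_{x_1}$ to dominate, for all sufficiently large $n$, the local Lipschitz constant $C_{x_n,\bar\delta}$ by the corresponding constant $C_{y_n,\delta}$ on the periodic cycle. Once this pointwise domination is in hand, taking logarithms and passing to the Cesàro average delivers the desired inequality between the Lyapunov exponents.

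First I would fix $\bar\delta:=\delta/2$, let $k$ denote the period of $\mathcal{O}_{y_1}$, and write $y_n:=y_{((n-1)\bmod k)+1}$ so that asymptotic periodicity is expressed as $|x_n-y_n|\to 0$ as $n\to\infty$. Pick $N\in\N$ with $|x_n-y_n|<\delta-\bar\delta$ for every $n\geq N$. A single triangle inequality then yields the inclusion $N_{\bar\delta}(x_n)\subset N_\delta(y_n)$ for $n\geq N$: any $z\in N_{\bar\delta}(x_n)$ satisfies $|z-y_n|\leq|z-x_n|+|x_n-y_n|<\bar\delta+(\delta-\bar\delta)=\delta$.

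Next I would invoke the elementary monotonicity of the local Lipschitz seminorm in its domain: restricting to a smaller neighborhood can only decrease the infimum in the definition of $\tn{Lip}(f,U)$, so $C_{x_n,\bar\delta}\leq C_{y_n,\delta}$ for every $n\geq N$. Taking logarithms and splitting the Cesàro sum at index $N$, this pointwise domination gives
\begin{equation*}
\frac{1}{n}\sum_{i=1}^{n}\ln C_{x_i,\bar\delta}\leq \frac{1}{n}\sum_{i=1}^{N}\ln C_{x_i,\bar\delta}+\frac{1}{n}\sum_{i=N+1}^{n}\ln C_{y_i,\delta}.
\end{equation*}
The initial block on the right is a fixed real number, hence $O(1/n)$, and the remaining $y$-sum differs from $\tfrac{1}{n}\sum_{i=1}^{n}\ln C_{y_i,\delta}$ by another $O(1/n)$ truncation. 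Letting $n\to\infty$, the right-hand side converges to $h_\delta(y_1)$ by hypothesis, while the left-hand side converges to $h_{\bar\delta}(x_1)$, yielding $h_{\bar\delta}(x_1)\leq h_\delta(y_1)$.

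The only substantive step is the pointwise domination $C_{x_n,\bar\delta}\leq C_{y_n,\delta}$ coming from the inclusion above plus monotonicity of the local Lipschitz constant; the rest is standard asymptotic bookkeeping. The one subtlety that needs checking is that every $C_{x_i,\bar\delta}$ and $C_{y_i,\delta}$ is finite and strictly positive so that all logarithms are well defined, which is precisely what the $\delta$-uniform strict Lipschitz hypothesis provides (finiteness from $\bar\delta<\delta$ and uniform Lipschitz control, positivity from the strict Lipschitz condition).
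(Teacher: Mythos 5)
Your proof is correct, and the underlying mechanism is the same as the paper's: establish the termwise domination $C_{x_n,\bar{\delta}}\leq C_{y_n,\delta}$ for all large $n$ via the inclusion $N_{\bar{\delta}}(x_n)\subset N_{\delta}(y_n)$, then average. Where you genuinely differ is in how that domination is justified and in how the period is handled. The paper first reduces to the fixed-point case, argues that the quotients $|f(x_k)-f(z)|/|x_k-z|$ eventually fall below $C_{y,\delta}$ by a limit argument, and then treats period $k>1$ by passing to $f^{k}$ and using $h_{\bar{\delta}}(x_1)=\tfrac{1}{k}h^{k}_{\bar{\delta}}(x_1)$. You instead invoke the monotonicity of $\tn{Lip}(f,\cdot)$ under restriction of the domain, which is both more elementary and more robust: the paper's quotient estimate only controls quotients anchored at $x_k$, whereas $C_{x_k,\bar{\delta}}$ is a supremum over \emph{all} pairs in $N_{\bar{\delta}}(x_k)$, so inclusion plus monotonicity is really what closes that step in either version. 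Your direct Ces\`aro split also avoids the detour through $f^{k}$ and the unstated comparison between the local constants of $f^{k}$ and products of those of $f$. Two caveats, which affect the paper's proof equally and are worth a sentence in a final write-up: one must know each $C_{x_i,\bar{\delta}}>0$ so that the logarithms are finite (this is where the ``strictly Lipschitz'' hypothesis is meant to enter), and the phase alignment $|x_n-y_n|\to 0$ holds only after a suitable relabelling of the periodic orbit, which is harmless since $h_\delta(y_1)=h_\delta(y_j)$ for every $j$.
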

\begin{proof}
Assume ${\mathcal O}_{y_1} = \{y_1, y_2, \ldots\}=\{y\}$, that is, $y$ is a fixed point of $f$. Then $x_n\to y$ as $n\to\infty$ and
$$
\lim_{n\to\infty}\left| \frac{f(x_n)-f(z)}{x_n-z}\right|=\left| \frac{f(y)-f(z)}{y-z}\right|< C_{y,\delta},\quad z\neq x_n,z\neq y, z\in N_\delta(y),
$$
where $C_{y,\delta}$ denotes the locally Lipschitz constant of $f$ in $N_\delta(y)$.  Thus, for $n_\delta$ sufficiently large, there is $\bar{\delta}>0$ such that $N_{\bar{\delta}}(x_k)\subset N_\delta(y)$ for all $k>n_\delta$ and 
$$
\left| \frac{f(x_k)-f(z)}{x_k-z}\right|< C_{y,\delta},\quad z\neq x_k, z\in N_{\bar{\delta}}(x_k),
$$
which implies $C_{x_k,\bar{\delta}}\leq C_{y,\delta}$ for all $k>n_\delta$, hence $h_{\bar{\delta}}(x_1)\leq h_\delta(y)$. 

If $k > 1$, we know that $y_1$ is a fixed point of $f^{k}$ (which is also locally Lipschitz)  and ${\mathcal O}_{x_1}$ is asymptotically periodic under $f^{k}$ to ${\mathcal O}_{y_1}$. Applying the reasoning above to $x_1$ and $f^{k}$ it follows that $h^k_{\bar{\delta}}(x_1)\leq h^k_\delta(y_1)$. The result follows noting that $h_{\bar{\delta}}(x_1)=\frac{1}{k}h_{\bar{\delta}}^k(x_1)$.
\end{proof}

Consider the locally Lipschitz maps  
\begin{equation*}
f(x)=
\begin{cases}
0.15x+0.75,\, x<0.4794,\\
3.3x(1-x),\, x\geq 0.4794, 
\end{cases}\tn{ and }\,\,
\end{equation*}
We calculate the $\delta$-Lyapunov exponent of the periodic orbit $\{y_1,y_2\}=\{0.4794,08236\}$. It is easy to see that for each $\delta>0$,
\begin{equation}\label{expliaplip}
C_{y_1,\delta} C_{y_2,\delta}\leq 0.15\cdot 2.1357\cdot 2\delta
\end{equation}
Since this periodic orbit is a sink, every orbit ${\mathcal O}_{x_1}=\{x_1,x_2,...\}$ asymptotically to $\{x_1,x_2\}$ have $\delta$-Lyapunov exponent bounded by \eqref{expliaplip}. 

\bibliographystyle{abbrv}
\bibliography{References}
\end{document}